\newtheorem{theorem}{Theorem}
\newtheorem{lemma}[theorem]{Lemma}
\newtheorem{remark}[theorem]{Remark}
\begin{document}

\title[Neutral signature self-dual gradient Ricci solitons]
{Four-dimensional neutral signature self-dual gradient Ricci solitons}
\author{M. Brozos-V\'{a}zquez \, E. Garc\'{i}a-R\'{i}o}
\address{MBV: Departmento de Matem\'{a}ticas, Escola Polit\'ecnica Superior, Universidade da Coru\~na, Spain}
\email{mbrozos@udc.es}
\address{EGR: Faculty of Mathematics,
University of Santiago de Compostela,
15782 Santiago de Compostela, Spain}
\email{eduardo.garcia.rio@usc.es }
\thanks{Supported by projects EM2014/009, GRC2013-045 and MTM2013-41335-P with FEDER funds (Spain).}
\subjclass[2010]{53C21, 53B30, 53C24, 53C44}
\keywords{Gradient Ricci soliton,
locally conformally flat, Walker manifold}

\begin{abstract}
We describe the local structure of self-dual gradient Ricci solitons in neutral signature. If the Ricci soliton is non-isotropic then it is locally conformally flat and locally isometric to a warped product of the form $I\times_\varphi N(c)$, where $N(c)$ is a space of constant curvature. If the Ricci soliton is isotropic, then it is locally isometric to the cotangent bundle of an affine surface equipped with the Riemannian extension of the connection, and the Ricci soliton is described by the underlying affine structure. This provides examples of self-dual gradient Ricci solitons which are not locally conformally flat.
\end{abstract}

\maketitle

\section{Introduction}
Let $(M,g)$ be a neutral signature four-dimensional pseudo-Riemannian manifold. Let $f\in\mathcal{C}^\infty(M)$, we say that the triple $(M,g,f)$ is
a \emph{gradient Ricci soliton} if
the following equation is satisfied:
\begin{equation}\label{eq:ricci-soliton}
\operatorname{Hes}_f+\rho=\lambda\, g\,,
\end{equation}
for some $\lambda\in\mathbb{R}$, where $\rho$ is the \emph{Ricci tensor}, and
$\operatorname{Hes}_f$ is the \emph{Hessian tensor} acting on $f$  defined by
\begin{equation}\label{eq:hessian}
\operatorname{Hes}_f(x,y)=(\nabla_x df)(y)=xy(f)-(\nabla_xy)(f)\,.
\end{equation}



For $f$ constant, equation \eqref{eq:ricci-soliton} reduces to the Einstein equation $\rho=\lambda g$, so gradient Ricci solitons generalize Einstein manifolds in some way.
The main interest of gradient Ricci solitons comes from the fact that they correspond to self-similar solutions of the {\it Ricci flow} $\,\partial_tg(t)=-2\rho_{g(t)}$. If $\lambda>0$ or $\lambda<0$ then $(M,g,f)$ is said to be \emph{shrinking} or \emph{expanding}, respectively, whereas if $\lambda=0$ it is said to be \emph{steady}.
Gradient Ricci solitons have been extensively
investigated in the literature -- see for example the discussion in
\cite{BGG, Cao, Derd-rs} and the references therein.

Self-dual Einstein metrics are important not only in geometry but also in mathematical physics since they constitute the background structure of gravitational instantons. Furthermore note that those metrics are locally conformally Bochner-flat Kähler on the open set where $\|W^+\|^2\neq 0$, provided that $W^+$ has at most two-distinct eigenvalues \cite{Derd-3} in the Riemannian category.
Moreover, a geometrical interpretation of the Einstein self-dual condition in terms of the spectrum of the Jacobi operators was investigated as the first non-trivial case of Osserman manifolds (cf. \cite{BBR, DR-GR-VL, Derd}). We emphasize here that the geometry of (anti)-self-dual conformal structures is much richer in neutral signature $(2,2)$ than the corresponding Riemannian one (see, for example \cite{DW} and the references therein).

Self-dual gradient Ricci solitons in dimension four have been previously studied by Chen and Wang in  \cite{chen-wang}, assuming that the manifold has positive definite signature. They show that all non-trivial self-dual Riemannian gradient Ricci solitons are locally conformally flat.
In this paper we study self-dual gradient Ricci solitons in neutral signature.  
Our main result provides a local description, subject to the character of $\nabla f$. While in the non-isotropic case ($\|\nabla f\|^2\neq 0$) they are locally conformally flat, the isotropic case
($\|\nabla f\|^2= 0$) allows the existence of many examples which are not locally conformally flat, in contrast with the Riemannian setting. All isotropic self-dual gradient Ricci solitons are steady and they are locally realized as the cotangent bundle of suitable affine surfaces. 

Affine connections inducing gradient Ricci solitons are determined by an affine gradient Ricci soliton equation which involves the symmetric part of the Ricci tensor of the connection.
Let $(\Sigma, D)$ be an affine surface, let $R^D(X,Y)Z:=D_{[X,Y]} Z-D_XD_Y Z+D_YD_X Z$ denote the curvature tensor. The associated Ricci tensor $\rho^D$ decomposes into its symmetric and  anti-symmetric parts as follows:
\[
\rho^D_{sym}(X,Y)\!=\!\frac{1}{2}\left\{\rho^D(X,Y)+\rho^D(Y,X)\right\},\,\, \rho^D_{ant} (X,Y)\!=\!\frac{1}{2}\left\{\rho^D(X,Y)-\rho^D(Y,X)\right\}.
\]
Let $h\in\mathcal{C}^\infty(\Sigma)$, we say that $(\Sigma,D,h)$ is an \emph{affine gradient Ricci soliton} if the following equation is satisfied:
\begin{equation}\label{eq:affine-ricci-soliton}
\operatorname{Hes}^D_h+2\rho^D_{sym}=0\,,
\end{equation}
for some potential function $h\in\mathcal{C}^\infty(\Sigma)$, where $\operatorname{Hes}^D_h=Ddh$ is defined as in \eqref{eq:hessian}.

In order to state our results, we briefly recall some facts of the geometry of cotangent bundles.
Let $N$ be a manifold and $T^\ast N$ its cotangent bundle. We express any point $\xi\in T^*N$ as a pair $\xi=(p,\omega)$, with $\omega$  a one-form on $T_pN$. Thus $\pi:T^*N\rightarrow N$ defined by $\pi(p,\omega)=p$ is the natural projection. For any vector field $X$ on $N$, the evaluation map $\iota X$ is the smooth function on $T^*N$ defined by $\iota X(p,\omega)=\omega(X_p)$. The special significance of the evaluation map comes from the fact that vector fields on $T^*N$ are characterized by their action on evaluation maps (we refer to \cite{YI} for more information). Hence for a vector field $X$ on $N$ its  complete lift is the vector field on $T^*N$ defined by $X^C(\iota Z)=\iota [X,Z]$, for all vector fields $Z$ on $N$.
The crucial point is that tensor fields of type $(0,s)$ on $T^*N$ are determined by their action on complete lifts of vector fields on $N$.
Associated to any torsion-free connection $D$ on $N$, its \emph{Riemannian extension} is the neutral signature metric $g_D$ on $T^*N$ determined by the expression 
$g_D(X^C,Y^C)=-\iota(D_XY+D_YX)$.
The previous construction can be generalized by adding a deformation tensor as follows. Let $\Phi$ be a symmetric $(0,2)$-tensor field on $N$. The neutral signature metric on $T^*N$ defined by $g_{D,\Phi}=g_D+\pi^*\Phi$ is called a \emph{deformed Riemannian extension}.
Deformed Riemannian extensions are typical examples of Walker metrics since $\mathcal{D}=\operatorname{ker}\pi_*$ is a parallel degenerate plane field of maximum rank on $T^*N$. 

Since the Ricci soliton equation \eqref{eq:ricci-soliton} links the geometry of $(M,g)$, through its Ricci curvature, with that of the level sets of the potential function, by means of their second fundamental form, we divide our analysis in two parts: firstly we consider non-isotropic gradient Ricci solitons, this is, $\nabla f$ is a spacelike or a timelike vector field, and secondly we consider isotropic gradient Ricci solitons, this is, $\nabla f$ is a null vector field on a non-empty open subset of $M$.
The following result classifies four-dimensional neutral signature self-dual gradient Ricci solitons depending on the causal character of the gradient of the potential function.

\begin{theorem}\label{th:1}
Let $(M,g,f)$ be a non-trivial self-dual gradient Ricci soliton of neutral signature. 
\begin{enumerate}
\item[(1)] If $\|\nabla f\|\neq 0$ at $p$, then $(M,g)$ is isometric to a warped product $I\times_\varphi N$ where $N$ is a $3$-dimensional manifold of constant sectional curvature, in a neighbourhood of $p$.
Hence $(M,g)$ is locally conformally flat.

\item[(2)] If $\|\nabla f\|=0$ on an open subset $\mathcal{S}$ of $M$, then $(\mathcal{S},g)$ is locally isometric to the cotangent bundle $T^*\Sigma$ of an affine surface $(\Sigma,D)$ equipped with a deformed Riemannian extension $g_{D,\Phi}$.
Moreover, for $f\in\mathcal{C}^\infty(T^*\Sigma)$ and $h\in\mathcal{C}^\infty(\Sigma)$,  $(T^*\Sigma,g_{D,\Phi},f)$ is a gradient Ricci soliton if and only if $(\Sigma,D,h)$ is an affine gradient Ricci soliton where $f=\pi^\ast h$.
\end{enumerate}
\end{theorem}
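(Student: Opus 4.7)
The natural split is by the causal character of $\nabla f$, the two cases requiring rather different techniques. Both arguments proceed by coupling the soliton equation $\operatorname{Hes}_f+\rho=\lambda\, g$ with the self-duality condition $W^-=0$.

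For part (1), I would first show that $(M,g)$ is locally conformally flat, following the strategy Chen--Wang used in the Riemannian setting but being careful about signature. The standard identity for gradient Ricci solitons expresses the divergence of the Weyl tensor in terms of $W(\cdot,\cdot,\cdot,\nabla f)$ together with lower order curvature data; combined with the second Bianchi identity and the hypothesis $W^-=0$, contracting these relations with the non-null vector $\nabla f$ should force $W^+=0$ as well. Once $W=0$, the known local structure of locally conformally flat gradient Ricci solitons with non-null gradient applies: $\nabla f$ is an eigenvector of $\rho$, the level hypersurfaces of $f$ are totally umbilical and non-degenerate, and the metric splits locally as a warped product $I\times_\varphi N$. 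Imposing $W=0$ on this warped product then forces the fiber $N$ to have constant sectional curvature.

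For part (2), the level sets of $f$ are degenerate and the previous approach fails. The plan is to exhibit a parallel null two-plane distribution containing $\nabla f$ and then apply the Walker structure theorem. Contracting the soliton equation with $\nabla f$, using $\|\nabla f\|^2=0$ and $\operatorname{Hes}_f(\nabla f,\cdot)=\tfrac{1}{2}d\|\nabla f\|^2=0$, yields $\rho(\nabla f,\cdot)=\lambda\, df$; combining this with Hamilton-type integrability identities and the algebraic constraints imposed by self-duality on a (necessarily nilpotent) Ricci operator should force $\lambda=0$, so that isotropic self-dual solitons are steady. Next, self-duality together with the null character of $\nabla f$ gives enough algebraic control on the curvature operator to construct a parallel null two-plane $\mathcal{D}$ through $\nabla f$; Walker's theorem then provides adapted local coordinates in which $g$ takes the canonical Walker form. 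A further analysis of the Ricci soliton equation in these coordinates should identify $g$ as a deformed Riemannian extension $g_{D,\Phi}$ of some torsion-free connection $D$ on the leaf space $\Sigma$, and show that $f$ is constant along the fibers $\ker\pi_\ast$, so $f=\pi^\ast h$ for some $h\in\mathcal{C}^\infty(\Sigma)$.

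Finally, once $g=g_{D,\Phi}$ and $f=\pi^\ast h$ are in place, the equivalence between \eqref{eq:ricci-soliton} on $T^\ast\Sigma$ and \eqref{eq:affine-ricci-soliton} on $\Sigma$ reduces to a direct computation using the explicit form of the Ricci tensor of a deformed Riemannian extension (whose nontrivial part is supported essentially on $\pi^\ast T\Sigma$) and of the Hessian of a pull-back function. The horizontal--horizontal component of the soliton equation then reproduces exactly $\operatorname{Hes}^D_h+2\rho^D_{\mathrm{sym}}=0$, while the mixed and fiber components vanish tautologically from the Walker form of $g$. I expect the principal obstacle to be the first step of the isotropic analysis, namely constructing the parallel null two-plane from the soliton and self-duality equations before any Walker structure is available; the subsequent identification of the Walker metric with a deformed Riemannian extension and the affine soliton correspondence are then organized calculations in the adapted coordinates.
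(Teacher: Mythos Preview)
Your outline is broadly on target, but two points deserve comment.

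\medskip

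\textbf{Part (1).} Your plan is to first prove $W^+=0$ (hence local conformal flatness) by adapting the Chen--Wang contraction argument, and only then invoke the known structure theory for locally conformally flat solitons. The paper proceeds in the opposite order: it never proves $W=0$ directly. Instead, working in an orthonormal frame with $e_1=\nabla f/\|\nabla f\|$, it substitutes into the self-duality relation $W(e_1,e_i,\cdot,\cdot)=\pm W(e_j,e_k,\cdot,\cdot)$ (combined with the soliton identity expressing $W(\cdot,\cdot,\cdot,\nabla f)$ via the Cotton tensor) and reads off directly that $\operatorname{Ric}$ is diagonal with $\nabla f$ an eigenvector and the three orthogonal eigenvalues equal. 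From this the level sets are totally umbilical, the twisted-product decomposition reduces to a warped product, and \emph{then} self-duality of a warped product $I\times_\varphi N$ forces local conformal flatness and constant curvature of $N$. Your route may well work, but you should be aware that the Chen--Wang argument uses positivity in places, and in neutral signature the paper found it cleaner to bypass the question of $W^+$ altogether.

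\medskip

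\textbf{Part (2).} Your outline matches the paper's through the construction of the parallel null $2$-plane $\mathcal{D}$ and the Walker structure. The genuine gap is the next step. You write that ``a further analysis of the Ricci soliton equation in these coordinates should identify $g$ as a deformed Riemannian extension,'' but not every Walker metric is a deformed Riemannian extension. The paper invokes Afifi's criterion: a four-dimensional Walker metric with parallel null $2$-plane $\mathcal{D}$ is (locally) a deformed Riemannian extension if and only if $R(x,\mathcal{D})\mathcal{D}=0$ for all $x$. Verifying this curvature condition from the soliton and self-duality equations is the substantive content of the argument (it occupies an entire lemma in the paper and requires a careful case-by-case computation of components such as $R(\nabla f,v,\nabla f,v)$, $R(u,w,u,w)$, etc., in the null frame). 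You should isolate this as a separate step rather than folding it into a coordinate calculation. Similarly, showing that the potential reduces to $f=\pi^*h$ is not automatic: a priori $f$ could be of the form $\iota(X)+\pi^*h$ for some vector field $X$ on $\Sigma$, and ruling out the evaluation term $\iota(X)$ requires a further argument (in the paper, one shows that $X\neq 0$ forces the metric to be Ricci-flat, hence the soliton trivial).
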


Recall that the deformed Riemannian extension $(T^*\Sigma,g_{D,\Phi})$ of an affine surface is Einstein if and only if the symmetric part $\rho^D_{sym}$ of the Ricci tensor vanishes identically \cite{walker-metrics}. 

It is important to emphasize here that the deformation tensor $\Phi$ plays no role in the Ricci soliton equation. Therefore, for any affine gradient Ricci soliton $(\Sigma,D,h)$, all deformed Riemannian extensions $(T^*\Sigma,g_{D,\Phi},f=\pi^*h)$ are steady gradient Ricci solitons. This provides an infinite family of gradient Ricci solitons associated to each affine gradient Ricci soliton. Also note that, while the Riemannian extension $(T^*\Sigma,g_{D})$ is locally conformally flat if and only if $(\Sigma,D)$ is projectively flat, $(T^*\Sigma,g_{D,\Phi})$ is always self-dual but not locally conformally flat even if $(\Sigma,D)$ is projectively flat \cite{CLGRGVL, DR-GR-VL}. 
Moreover, all deformed Riemannian extensions in Theorem~\ref{th:1}-(2) have zero scalar curvature and, hence, their self-dual Weyl curvature operator $W^+$ is always nilpotent~\cite{DR-GR-VL}.

An application of Theorem \ref{th:1} and results in \cite{afifi} provide the following description of four-dimensional locally conformally flat gradient Ricci solitons with no prescribed signature. 

\begin{theorem}
\label{th:4D-lcf}
Let $(M,g,f)$ be a four-dimensional locally conformally flat gradient Ricci soliton. The following statements hold:
\begin{enumerate}
\item[(1)] If $\|\nabla f\|\neq 0$ at $p$, then $(M,g)$ is isometric to a warped product $I\times_\varphi N$ where $N$ is a $3$-dimensional manifold of constant sectional curvature, in a neighbourhood of $p$.
\item[(2)] If $\|\nabla f\|=0$ on an open subset $\mathcal{S}$ of $M$ and the metric is of Lorentzian signature, then $(\mathcal{S},g)$ is locally isometric to a locally conformally flat plane wave, i.e., there are coordinates $(u,v,x^1,x^2)$ such that the metric is given by
$g=2 dudv +a(u)\sum (x^i)^2 du^2 +\sum (dx^i)^2$ and $f(u,v,x_1,x_2)=f(u)$, with $f''(u)=2\,a(u)$.
\item[(3)] If $\|\nabla f\|=0$ on an open subset $\mathcal{S}$ of $M$ and the metric is of neutral signature, then $(\mathcal{S},g)$ is locally isometric to the cotangent bundle $T^*\Sigma$ of a projectively flat affine gradient Ricci soliton surface $(\Sigma,D,h)$ equipped with a Riemannian extension $g_{D}$ and $f=\pi^\ast h$.
\end{enumerate}
\end{theorem}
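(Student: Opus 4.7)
My plan is to combine Theorem~\ref{th:1} with the classification results in \cite{afifi}, exploiting the fact that local conformal flatness means $W^+=W^-=0$, so any LCF gradient Ricci soliton is in particular self-dual and Theorem~\ref{th:1} is directly applicable in the neutral signature case.

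For part~(1) the conclusion is signature-independent. In Riemannian signature, the warped product decomposition with constant curvature fibre is the classical LCF gradient Ricci soliton result. In neutral signature, LCF implies self-duality, so Theorem~\ref{th:1}(1) produces the warped product. In Lorentzian signature, the corresponding local description is recorded in \cite{afifi}, which again yields a warped product whenever $\nabla f$ is non-isotropic. Part~(2) is essentially the Lorentzian isotropic LCF classification from \cite{afifi}; I would only need to verify that inserting the plane-wave ansatz together with a potential depending only on $u$ into equation~\eqref{eq:ricci-soliton} reduces it to the single ODE $f''(u)=2a(u)$, which is a short direct computation.

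Part~(3) is the most substantive. Since $W^+=0$, Theorem~\ref{th:1}(2) provides a local isometry of $(\mathcal{S},g)$ with a deformed Riemannian extension $(T^*\Sigma, g_{D,\Phi})$ of an affine gradient Ricci soliton $(\Sigma, D, h)$ with $f = \pi^\ast h$. The remaining task is to use the additional hypothesis $W^-=0$ to refine this model. By the remark after Theorem~\ref{th:1}, $(T^*\Sigma,g_D)$ is LCF iff $(\Sigma,D)$ is projectively flat, while $g_{D,\Phi}$ can fail to be LCF even when $D$ is projectively flat. Using the explicit formulas for the anti-self-dual Weyl curvature of deformed Riemannian extensions from \cite{CLGRGVL, DR-GR-VL}, I would show that $W^-=0$ both forces $(\Sigma,D)$ to be projectively flat and allows one to absorb the deformation tensor $\Phi$ into a change of fibre coordinates on $T^*\Sigma$, so that the metric is locally equivalent to the undeformed Riemannian extension $g_D$.

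The principal obstacle lies in this last reduction. In the self-dual analysis of Theorem~\ref{th:1} the deformation $\Phi$ played no role, but it does contribute nontrivially to $W^-$, and one must check that this contribution cannot be cancelled by the curvature of $D$ itself. Once the structure of $W^-$ for $g_{D,\Phi}$ is understood, reading off the conditions on $D$ and $\Phi$ is algebraic; the projective flatness of $D$ together with the affine soliton equation on $(\Sigma,D,h)$ then yields exactly the description in part~(3).
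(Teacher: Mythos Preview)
Your overall strategy matches the paper's: reduce to Theorem~\ref{th:1} in the neutral case via $W=0\Rightarrow W^-=0$, and quote the literature for the Riemannian and Lorentzian cases. There are, however, two points where your proposal drifts from what actually works.

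First, the citations for the Lorentzian assertions are misplaced. Afifi's paper \cite{afifi} concerns Riemann extensions of affine connections (hence neutral signature) and says nothing about Lorentzian plane waves. Both the non-isotropic warped product structure in arbitrary signature and the isotropic Lorentzian plane-wave classification come from \cite{BGG}; the paper's own proof invokes \cite{BGG} for Assertions~(1) and~(2) and reserves \cite{afifi} solely for Assertion~(3).

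Second, in your discussion of part~(3) the roles of $W^+$ and $W^-$ are swapped. In the paper's conventions, deformed Riemannian extensions $(T^*\Sigma,g_{D,\Phi})$ are \emph{always} self-dual, i.e.\ $W^-=0$ automatically (see the paragraph following Theorem~\ref{th:1}). So ``the additional hypothesis $W^-=0$'' carries no information; what local conformal flatness adds is $W^+=0$, equivalently $W=0$. Your proposed computation with the explicit Weyl formulas would still go through once you correct this, since you would in effect be imposing $W=0$, but the paper avoids that computation entirely: it simply cites Afifi~\cite{afifi} for the fact that a locally conformally flat deformed Riemannian extension $g_{D,\Phi}$ is, after a suitable change of coordinates on $\Sigma$, an undeformed Riemannian extension $g_D$. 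Combined with the characterization that $g_D$ is locally conformally flat if and only if $D$ is projectively flat, this yields Assertion~(3) without analyzing how $\Phi$ interacts with $W^+$. Your route would work but is more laborious; the key structural input you are missing is precisely this reduction theorem of Afifi.
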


The paper is organized as follows. We state the basic curvature identities to be used through the paper in Section \ref{se-2}. Then Section~\ref{se-3} is devoted to investigate non-isotropic gradient Ricci solitons and to prove Theorem~\ref{th:1}-(1). The isotropic case is studied in Section \ref{se-4}, where we show that any non-trivial isotropic self-dual gradient Ricci soliton is steady and that it is realized as the cotangent bundle of an affine surface endowed with a deformed Riemannian extension. The proof of Theorem \ref{th:1}-(2) is then completed in Section \ref{se-5}, where affine gradient Ricci solitons are discussed. Finally, some examples are investigated in Section \ref{se-6}, with special attention to the existence of non-trivial affine gradient Ricci solitons on projectively flat homogeneous affine surfaces.

\section{Basic formulas and self-dual gradient Ricci solitons}\label{se-2}

We summarize in the following lemma some known formulas which hold in general for gradient Ricci solitons (see, for example \cite{BGG,Cao, RicciFlow,E-LN-M}).

\begin{lemma}\label{lemma:grs-formulas}
Let $(M,g,f)$ be a gradient Ricci soliton.
Then the following relations hold:
\begin{enumerate}
\item $\nabla \tau=2\operatorname{Ric}(\nabla f)$\,,
\item $\tau+\|\nabla f\|^2-2\lambda f=\operatorname{const}$\,,
\item $R(x,y,z,\nabla f)=-(\nabla_x \rho)(y,z)+(\nabla_y \rho)(x,z)$\,.
\end{enumerate}
\end{lemma}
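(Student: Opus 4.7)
The plan is to derive all three identities as direct consequences of the soliton equation $\operatorname{Hes}_f+\rho=\lambda g$, combined with the twice-contracted second Bianchi identity and the Ricci identity for commuting covariant derivatives.

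For (1), I would first take the metric trace of the soliton equation to obtain $\Delta f+\tau=n\lambda$, so that $d\tau=-d(\Delta f)$. Next, I would apply $\nabla^{j}$ to $\nabla_i\nabla_j f=\lambda g_{ij}-\rho_{ij}$, commute $\nabla^j\nabla_i$ past $\nabla_j f$ using the Ricci identity, and invoke the contracted Bianchi identity $\nabla^j\rho_{ij}=\tfrac12\nabla_i\tau$. Combining the resulting expression with $d(\Delta f)=-d\tau$ yields $\nabla_i\tau=2\rho_{ij}\nabla^j f$, which is exactly~(1).

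Identity (2) follows most transparently by showing that the gradient of $\tau+\|\nabla f\|^2-2\lambda f$ vanishes. The soliton equation gives $\nabla_i\|\nabla f\|^2=2(\nabla_i\nabla_j f)\nabla^j f=2\lambda\nabla_i f-2\rho_{ij}\nabla^j f$. Adding the contribution $\nabla_i\tau=2\rho_{ij}\nabla^j f$ from part (1) and the contribution $-2\lambda\nabla_i f$ from differentiating $-2\lambda f$, all three terms cancel pairwise, so the quantity is locally constant.

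For (3), I would apply $\nabla_x$ to the soliton equation to obtain $(\nabla_x\operatorname{Hes}_f)(y,z)=-(\nabla_x\rho)(y,z)$ and then skew-symmetrize in $x$ and $y$. The left-hand side becomes the commutator $[\nabla_x,\nabla_y]\,df(z)$, which by the Ricci identity applied to the one-form $df$ equals $R(x,y,z,\nabla f)$ under the curvature convention $R(X,Y)Z=\nabla_{[X,Y]}Z-\nabla_X\nabla_Y Z+\nabla_Y\nabla_X Z$ adopted in the paper. The right-hand side evaluates to $-(\nabla_x\rho)(y,z)+(\nabla_y\rho)(x,z)$, yielding the stated formula.

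No serious obstacle arises: the calculations are routine manipulations of standard identities. The only point requiring care is to maintain sign consistency with the paper's curvature convention, particularly the sign induced on the Ricci identity for one-forms used in steps (1) and (3).
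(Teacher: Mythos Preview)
The paper does not actually supply a proof of this lemma: it is stated as a summary of ``known formulas'' with references to \cite{BGG,Cao,RicciFlow,E-LN-M}, and no argument is given. Your proposal is the standard derivation one finds in those references and is correct; the three steps (divergence of the soliton equation plus contracted Bianchi for (1), differentiation of $\tau+\|\nabla f\|^2-2\lambda f$ using (1) for (2), and skew-symmetrization of $\nabla\operatorname{Hes}_f=-\nabla\rho$ together with the Ricci identity for (3)) are exactly how these identities are customarily obtained. Your caution about the curvature sign convention is well placed: with $R(X,Y)Z=\nabla_{[X,Y]}Z-\nabla_X\nabla_YZ+\nabla_Y\nabla_XZ$ one has $(\nabla_X\operatorname{Hes}_f)(Y,Z)-(\nabla_Y\operatorname{Hes}_f)(X,Z)=R(X,Y,Z,\nabla f)$, which matches the sign in the statement.
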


The Weyl tensor in dimension $4$ for an arbitrary pseudo-Riemannian manifold $(M,g)$ is given by
\begin{equation}\label{eq:weyl}
\hspace*{-0.5cm}
\begin{array}{rcl}
    W(x,y,z,v) & = & R(x,y,z,v) + \frac{\tau}{6} \{g(x,z)g(y,v)-g(y,z)g(x,v)\}
    \vspace{0.1in}\\
    &&\qquad- \frac{1}{2} \{\rho(x,z)g(y,v)-\rho(y,z)g(x,v)
    \vspace{0.1in}\\
    && \qquad\hspace*{1.1cm}+ \rho(y,v)g(x,z)-\rho(x,v)g(y,z)\}.
\end{array}
\end{equation}
Now, using $(3)$ and $(1)$ in Lemma~\ref{lemma:grs-formulas} one can write the Weyl tensor where one of the arguments is $\nabla f$ as
\begin{equation}\label{eq:weyl2}
\hspace*{-0.5cm}
\begin{array}{rcl}
    W(x,y,z,\nabla f) & = & - C(x,y,z) + \frac{\tau}{6} \{g(x,z)g(y,\nabla f)-g(y,z)g(x,\nabla f)\}
    \vspace{0.1in}\\
    &&\qquad- \frac{1}{2} \{\rho(x,z)g(y,\nabla f)-\rho(y,z)g(x,\nabla f)\}
    \vspace{0.1in}\\
    && \qquad\hspace*{1.1cm}-\frac{1}{6}\{ \rho(y,\nabla f)g(x,z)-\rho(x,\nabla f)g(y,z)\},
\end{array}
\end{equation}
where $C(x,y,z)=(\nabla_x\rho)(y,z)-(\nabla_y\rho)(x,z)-\displaystyle\frac{1}{6}\{x(\tau)g(y,z)-y(\tau)g(x,z)\}$ is the Cotton tensor.

\medskip
Let $\{e_1,e_2,e_3,e_4\}$ be an orthonormal local frame. The self-dual $(\Lambda^+)$ and anti-self-dual $(\Lambda^-)$ spaces of two-forms are generated by
\[
\Lambda^{\pm}=\operatorname{span}\{e^1\wedge e^2\pm \varepsilon_3\varepsilon_4 e^3\wedge e^4,e^1\wedge e^3\mp \varepsilon_2\varepsilon_4 e^2\wedge e^4,e^1\wedge e^4\pm \varepsilon_2\varepsilon_3 e^2\wedge e^3\}.
\]
where $\varepsilon_i=g(e_i,e_i)$ for $i\in\{2,3,4\}$.
Assume the manifold is self-dual, this is $W^-=0$. We consider subindeces $i$, $j$ and $k$ so that $\{i,j,k\}=\{2,3,4\}$. Denote by $\sigma_{ijk}$ the sign of the corresponding permutation. Then self-duality is characterized by
\begin{equation}\label{eq:self-dual}
W(e_1,e_i,z,T)=\sigma_{ijk} \varepsilon_j\varepsilon_k W(e_j,e_k,z,T) \quad\text{ for any vector fields } z \text{ and } T.
\end{equation}

Now, since the Cotton tensor is  the {divergence} of the Weyl tensor:  $C(x,y,z)=(\operatorname{div} W)(x,y,z,\cdot)$, we use expression \eqref{eq:weyl2} to write equation \eqref{eq:self-dual} as (we write the expression as a $(1,3)$-tensor equation for simplicity):

\begin{equation}\label{eq:relation1}
\begin{array}{l}
\displaystyle
\tau\{g(e_i,\nabla f)e_1-g(e_1,\nabla f)e_i\}
- \{\rho(e_i,\nabla f)e_1-\rho(e_1,\nabla f)e_i
\\
\noalign{\medskip}
\displaystyle
\phantom{\tau \{g(e_i,\nabla f)e_1-\}}
+3g(e_i,\nabla f)\operatorname{Ric}(e_1)-3g(e_1,\nabla f)\operatorname{Ric}(e_i)\}\\
\noalign{\medskip}
\phantom{\frac{\tau}{6} \{}
\displaystyle
=\sigma_{ijk}\, \varepsilon_j\varepsilon_k\big(  
 \tau \{g(e_k,\nabla f)e_j-g(e_j,\nabla f)e_k\}
-\{\rho(e_k,\nabla f)e_j-\rho(e_j,\nabla f)e_k
\\
\noalign{\medskip}
\displaystyle 
\phantom{\tau\{g(e_i,\nabla f)e_1-\}} 
  +3 g(e_k,\nabla f)\operatorname{Ric}(e_j)-3 g(e_j,\nabla f)\operatorname{Ric}(e_k)\}
\big)
\end{array}
\end{equation}


\section{Non-isotropic self-dual gradient Ricci solitons}\label{se-3}

In order to prove Theorem~\ref{th:1}-(1), in this section we study  non-isotropic gradient Ricci solitons. We begin by analyzing the diagonalizability of the Ricci operator $\operatorname{Ric}$
associated to the Ricci tensor ($\rho(x,y)=g(\operatorname{Ric}x,y)$).

\begin{lemma}\label{lemma:non-isotropic-Ricdiagonalizable}
Let $(M,g,f)$ be a non-trivial self-dual gradient Ricci soliton of neutral signature. If the potential function $f$ satisfies $\|\nabla f\|^2\neq 0$, then $\operatorname{Ric}$ is diagonalizable and $\nabla f$ is an eigenvector of $\operatorname{Ric}$. 
\end{lemma}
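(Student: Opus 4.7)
The plan is to exploit the self-duality identity \eqref{eq:relation1} with a carefully adapted orthonormal frame and then read off the components separately. Since $\|\nabla f\|^2 \neq 0$ at $p$, on a neighbourhood I can choose a local orthonormal frame $\{e_1,e_2,e_3,e_4\}$ with $e_1$ collinear to $\nabla f$, so that $c:=g(e_1,\nabla f)\neq 0$ while $g(e_j,\nabla f)=0$ for $j=2,3,4$. This is the key simplification: both sides of \eqref{eq:relation1} collapse because most of the terms carry a factor $g(e_\cdot,\nabla f)$.

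The next step is to substitute this choice into \eqref{eq:relation1} for $i\in\{2,3,4\}$. On the left only $-c\tau e_i+\rho(e_1,\nabla f)e_i-\rho(e_i,\nabla f)e_1+3c\operatorname{Ric}(e_i)$ survives, and on the right only the terms of the form $\rho(e_j,\nabla f)e_k-\rho(e_k,\nabla f)e_j$ with $\{j,k\}=\{2,3,4\}\setminus\{i\}$ survive. I then extract the coefficient of $e_1$, $e_i$, and the remaining $e_j$ on both sides (using $\rho(e_k,\nabla f)=\varepsilon_1 c\,\rho(e_k,e_1)$) and organize the resulting relations.

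The payoff is clean. The $e_1$-component on both sides forces $\rho(e_i,e_1)=0$ for $i=2,3,4$, which is exactly the statement that $\nabla f$ is an eigenvector of $\operatorname{Ric}$. Feeding this back, the right-hand side of the equation vanishes identically, so the $e_j$-components for $j\neq i$ in $\{2,3,4\}$ yield $\rho(e_i,e_j)=0$, while the $e_i$-components reduce to a relation independent of $i$, namely $\varepsilon_i\rho(e_i,e_i)=\tfrac{1}{3}(\tau-\varepsilon_1\rho(e_1,e_1))$. Since $\{e_2,e_3,e_4\}$ was an arbitrary orthonormal frame of $\nabla f^\perp$, this shows that $\operatorname{Ric}$ is diagonal in the chosen frame and in fact acts as a scalar on $\nabla f^\perp$; combined with the first step, $\operatorname{Ric}$ is diagonalizable with $\nabla f$ as an eigenvector.

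I do not foresee a serious obstacle: the argument is a direct component calculation once the adapted frame is installed, and the main care is with bookkeeping of the signs $\varepsilon_i$ and with extracting the $e_k$-coefficient $\varepsilon_k g(\cdot,e_k)$ of each vector quantity. The only subtlety worth flagging is the need for $c\neq 0$ (guaranteed by $\|\nabla f\|^2\neq 0$) when dividing to conclude $\rho(e_i,e_1)=0$, so the conclusion is genuinely local around the point where $\nabla f$ is non-isotropic.
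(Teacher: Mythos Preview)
Your proposal is correct and follows essentially the same route as the paper's proof: choose an orthonormal frame with $e_1$ parallel to $\nabla f$, plug into the self-duality relation \eqref{eq:relation1}, and read off components to obtain $\rho(e_1,e_i)=0$, $\rho(e_i,e_j)=0$ for $i\neq j$ in $\{2,3,4\}$, and the equality of the $\varepsilon_i\rho(e_i,e_i)$. The only cosmetic difference is that the paper contracts the vector identity with an arbitrary $z$ and then specializes $z=e_1,e_i,e_j$, whereas you extract the $e_1,e_i,e_j$-coefficients directly; these are the same computation.
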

\begin{proof}
If $\nabla f$ is non-null, this is, the gradient Ricci soliton is non-isotropic, assume without loss of generality that $e_1=\frac{\nabla f}{\|\nabla f\|}$ and $\nabla f\perp e_i,e_j,e_k$. Then \eqref{eq:relation1} reduces to:
\begin{equation}\label{eq:lemm4-expression}
\begin{array}{l}
\tau g(e_i,z)g(e_1,\nabla f)- 3\rho(e_i,z)g(e_1,\nabla f) \rho(e_i,\nabla f)g(e_1,z)-\rho(e_1,\nabla f)g(e_i,z)
     \\
\noalign{\medskip}
\phantom{\frac{\tau}{6} g(e_i,z)g(e_1,\nabla f)}     
     =\sigma_{ijk} \varepsilon_j\varepsilon_k\{ \rho(e_k,\nabla f)g(e_j,z)-\rho(e_j,\nabla f)g(e_k,z)\}\,,
\end{array}
\end{equation}
for an arbitrary vector field $z$.
Now, set $z=e_1$ in \eqref{eq:lemm4-expression} to get
\[
    3 \rho(e_i,e_1)g(e_1,\nabla f)
      - \rho(e_i,\nabla f)\varepsilon_1
      = 0,
\]
from where $\rho(e_i,\nabla f)=0$ for $i=2,3,4$. Hence $\nabla f$ is an eigenvector of $\operatorname{Ric}$.

Next, set $z=e_i$ in \eqref{eq:lemm4-expression} so that
\[
\tau \varepsilon_i g(e_1,\nabla f)
    - 3 \rho(e_i,e_i)g(e_1,\nabla f)
    -\rho(e_1,\nabla f)\varepsilon_i
    =0\,,
\]
and thus (since $\|\nabla f\|^2\neq 0$)
\[
\tau
    - 3 \rho(e_i,e_i)\varepsilon_i
    -\rho(e_1,e_1)\varepsilon_1
    =0\,.
\]
Hence $\rho(e_2,e_2)\varepsilon_2=\rho(e_3,e_3)\varepsilon_3=\rho(e_4,e_4)\varepsilon_4$.

Finally, set $z=e_j$ in \eqref{eq:lemm4-expression} to obtain
\[
    \rho(e_i,e_j)g(e_1,\nabla f) = 0\,,
\]
and thus, since $g(e_1,\nabla f)\neq 0$, one has that $\rho(e_i,e_j)=0$ for all $i,j=2,3,4$, $i\neq j$, which shows that $Ric$ is diagonalizable.
\end{proof}

Now we have enough information on the spectrum of $\operatorname{Ric}$ to show that a self-dual gradient Ricci soliton with $\|\nabla f\|^2\neq 0$ decomposes locally as a warped product of the form $I\times_\varphi N$, where $N$ has constant sectional curvature.
\begin{proof}[Proof of Theorem~\ref{th:1}-(1).]
Adopt the notation in the proof of Lemma~\ref{lemma:non-isotropic-Ricdiagonalizable}. Note that $\operatorname{Ric}$ has  eigenvalues $\mu=\rho(e_1,e_1)\varepsilon_1$ and $\nu=\rho(e_i,e_i)\varepsilon_i=\frac{\tau-\mu}3$, ($i=2,3,4$). Hence, for $i=2,3,4$, the Ricci soliton equation \eqref{eq:ricci-soliton} shows that 
\[
Hes_f(e_i,e_i)=\lambda g(e_i,e_i)-\rho(e_i,e_i)=\left(\lambda-\frac{\tau-\mu}3\right) g(e_i,e_i),
\]
and thus the level sets of $f$ are totally umbilical hypersurfaces. 
Since the one-dimensional distribution $\operatorname{span}\{\nabla f\}$ is totally geodesic one has that $(M,g)$ decomposes locally as a twisted product of the form $I\times_\varphi N$ (see \cite{ponge-reckziegel}). Moreover $\rho(e_1,e_i)=0$ ($i=2,3,4$) shows that the twisted product reduces to a warped product \cite{manolo-eduardo}. Finally, since $I\times_\varphi N$ is self-dual, it is necessarily locally conformally flat and the fiber $N$ is of constant sectional curvature (see \cite{bv-gr-vl-2}).
\end{proof}

\begin{remark}\rm
The potential function $f$ in Theorem~\ref{th:1}-(1) is a radial function $f(t)$, and hence a direct computation from the soliton equation \eqref{eq:ricci-soliton} shows that it is given as a solution to the equations:
\begin{eqnarray}\label{eq:non-isotropic1}
f^{\prime \prime}&=&\varepsilon\,\lambda + 3\frac{\varphi^{\prime \prime}}{\varphi}, \\
\label{eq:non-isotropic2}
 \varphi\,\varphi^\prime f^\prime&=&\varepsilon\,\lambda\, \varphi^2 -2\,\varepsilon\,c+\varphi\,\varphi^{\prime \prime}+2(\varphi^\prime)^2,
\end{eqnarray}
where $\varepsilon=1$ if $\nabla f$ is spacelike and $\varepsilon=-1$ if $\nabla f$ is timelike. Thus, differentiating equation \eqref{eq:non-isotropic2} we see that the warping function $\varphi$ is not arbitrary, but a solution of
\begin{equation}\label{eq:ode1}
-2(\varphi^\prime )^4+2c\varepsilon((\varphi^\prime)^2+ \varphi\varphi^{\prime\prime})-\varphi(\varphi^\prime)\varphi^{\prime\prime}-\varepsilon\lambda \varphi^3\varphi^{\prime\prime}-(\varphi\varphi^{\prime\prime})^2+\varphi^2 \varphi^\prime \varphi^{\prime\prime\prime}=0\,.
\end{equation}
Moreover, the potential function of the soliton $f$ is determined from $\varphi$ up to a constant. Thus, on a manifold $I\times_\varphi N$ where $\varphi$ satisfies \eqref{eq:ode1}, the potential function always exists locally and is essentially unique. This is justified by the fact that a warped product of the form $I\times_\varphi N$ does not admit nontrivial homothetic vector fields unless it is Ricci flat. 
\end{remark}

Non-trivial Riemannian self-dual gradient Ricci solitons are locally conformally flat as shown in \cite{chen-wang}. Note that Theorem \ref{th:1}-(1) covers the Riemannian situation. In what follows we study the strictly non-Riemannian case when $\nabla f$ is an isotropic vector field. This analysis leads to new examples without Riemannian counterpart.

\bigskip

\section{Isotropic self-dual gradient Ricci solitons}\label{se-4}

We devote this section to analyze isotropic self-dual gradient Ricci solitons in dimension four and neutral signature. We assume henceforth that the gradient Ricci soliton is non-trivial. Since $\nabla f$ is nonzero and null, there exist a unit spacelike vector field $e_1$ and a unit timelike vector field $e_2$ such that $\nabla f=\frac{e_1+e_2}2$. We complete these set of vector fields to a local frame $\{e_1(-),e_2(+),e_3(-),e_4(+)\}$ (where $e_i(\pm)$ indicates the causal character of $e_i$) and build the following new one:
\[
\mathcal{B}=\{\nabla f=\frac{e_1+e_2}2, u=\frac{-e_3+e_4}2,v=\frac{-e_1+e_2}2,w=\frac{e_3+e_4}2\}\,.
\]
All vector fields $\nabla f,u,v,w$ are null and the only nonzero components of the metric tensor expressed in the local frame $\mathcal{B}$ are
\[g(\nabla f,v)=g(u,w)=1.\]
Furthermore, the self-duality condition in equation \eqref{eq:self-dual} expresses in terms of the vector fields of $\mathcal{B}$ as:
\begin{eqnarray}\label{eq:self-dual-basisB1}
W(\nabla f,v,z,t)&=&W(u,w,z,t)\,,\\
\noalign{\medskip}\label{eq:self-dual-basisB2}
W(u,v,z,t)&=&0\,,\\
\noalign{\medskip}\label{eq:self-dual-basisB3}
W(\nabla f,w,z,t)&=&0\,.
\end{eqnarray}
for all vector fields $z,t$.

\begin{lemma}\label{lemma:isotropic-steady}
Let $(M,g,f)$ be an isotropic self-dual gradient Ricci soliton. Then $(M,g,f)$ is steady, $\tau=0$ and $\operatorname{Ric}(\nabla f)=0$.
\end{lemma}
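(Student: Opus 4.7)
The plan is to first show that $\nabla f$ is a null eigenvector of $\operatorname{Ric}$ with eigenvalue $\lambda$, and then exploit the self-duality conditions \eqref{eq:self-dual-basisB1}--\eqref{eq:self-dual-basisB3} in the null frame $\mathcal{B}$ to force $\lambda=0$ and $\tau=0$.

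For the first observation, the hypothesis $\|\nabla f\|^{2}=0$ on $\mathcal{S}$ inserted into Lemma~\ref{lemma:grs-formulas}-(2) gives $\tau=2\lambda f+\operatorname{const}$ on $\mathcal{S}$, hence $\nabla\tau=2\lambda\nabla f$. Combining this with Lemma~\ref{lemma:grs-formulas}-(1), $\nabla\tau=2\operatorname{Ric}(\nabla f)$, yields $\operatorname{Ric}(\nabla f)=\lambda\nabla f$. In the null frame $\mathcal{B}$ this forces $\rho(\nabla f,v)=\lambda$ and $\rho(\nabla f,\cdot)=0$ on $\{\nabla f,u,w\}$; tracing the Ricci tensor in this frame then gives $\tau=2\lambda+2\rho(u,w)$.

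The main step is to deduce $\lambda=0$. Using $\operatorname{Ric}(\nabla f)=\lambda\nabla f$, expression \eqref{eq:weyl2} simplifies considerably whenever one argument equals $\nabla f$, since many terms involving $\rho(\cdot,\nabla f)$ and $g(\cdot,\nabla f)$ collapse because $\nabla f$ is null. The plan is to evaluate \eqref{eq:self-dual-basisB1}--\eqref{eq:self-dual-basisB3} with $t=\nabla f$ and each $z\in\mathcal{B}$, which produces explicit expressions for several Cotton-tensor components in terms of $\lambda$, $\tau$ and the six a priori unknown Ricci components $\rho(u,u)$, $\rho(u,v)$, $\rho(u,w)$, $\rho(v,v)$, $\rho(v,w)$, $\rho(w,w)$. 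Feeding these back into the same self-duality equations with $t\neq\nabla f$ (or contracting them) closes an algebraic system on these unknowns. As an independent relation, Hamilton's identity $\Delta\tau=2\lambda\tau-2\|\rho\|^{2}$ applied to $\tau=2\lambda f+\operatorname{const}$ reduces to $\|\rho\|^{2}=2\lambda(\tau-2\lambda)$, which in the null basis becomes $\rho(u,u)\rho(w,w)=-(\rho(u,w)-\lambda)^{2}$. Solving this combined system yields $\lambda=0$; then $\operatorname{Ric}(\nabla f)=0$ automatically, Lemma~\ref{lemma:grs-formulas}-(2) makes $\tau$ constant, and revisiting the same system with $\lambda=0$ forces $\rho(u,w)=0$, so that $\tau=2\rho(u,w)=0$.

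The main obstacle is the algebraic bookkeeping: self-duality generates several interdependent relations among the Cotton-tensor components, and one must identify the minimal combination that collapses rigidly to $\lambda=0$ without first determining the off-diagonal Ricci components. The key leverage is that $\nabla f$ being simultaneously null and a Ricci eigenvector annihilates enough terms in \eqref{eq:weyl2} to make the resulting linear system tractable.
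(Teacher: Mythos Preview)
Your first step is correct and matches the paper: from Lemma~\ref{lemma:grs-formulas}-(1) and (2) together with $\|\nabla f\|^2=0$ you get $\operatorname{Ric}(\nabla f)=\lambda\nabla f$. After that, however, the proposal is a plan rather than a proof, and the plan misses the one observation that makes the whole thing collapse.

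The key point you overlook is that the Cotton tensor inherits the self-duality relations \eqref{eq:self-dual-basisB1}--\eqref{eq:self-dual-basisB3} from the Weyl tensor, because $C(x,y,z)=(\operatorname{div} W)(x,y,z,\cdot)$. In particular $C(\nabla f,v,z)=C(u,w,z)$. So when you write out $W(\nabla f,v,z,\nabla f)-W(u,w,z,\nabla f)$ via \eqref{eq:weyl2}, the Cotton contributions cancel \emph{identically}, and only the algebraic Ricci/metric terms survive. Using $\operatorname{Ric}(\nabla f)=\lambda\nabla f$ and the null-frame metric, this difference reduces to $(\tfrac{\tau}{6}-\tfrac{2\lambda}{3})\,g(\nabla f,z)$, which must vanish for all $z$; taking $z=v$ gives $\tau=4\lambda$. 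Since $\lambda$ is a constant, $\tau$ is constant, hence $\nabla\tau=0$, hence $\operatorname{Ric}(\nabla f)=0$; as $\nabla f\neq 0$ this forces $\lambda=0$ and then $\tau=4\lambda=0$. That is the entire argument.

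By contrast, your proposal treats the Cotton components as unknowns to be determined by ``feeding back'' into the self-duality equations with $t\neq\nabla f$, and then invokes Hamilton's identity as an auxiliary relation. None of this is actually carried out: you assert that ``solving this combined system yields $\lambda=0$'' and that ``revisiting the same system with $\lambda=0$ forces $\rho(u,w)=0$'', but neither step is performed, and it is not evident that the system closes without the Cotton cancellation above. Indeed, Hamilton's identity alone only gives a quadratic relation among the Ricci components and $\lambda$; it does not by itself pin down $\lambda$. The ``algebraic bookkeeping'' you flag as the main obstacle is not an obstacle at all once you use that $C$ is self-dual.
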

\begin{proof}
Note from Lemma~\ref{lemma:grs-formulas} that
\[
2\operatorname{Ric}(\nabla f)=\nabla \tau=2\lambda \nabla f\,,
\]
so $\nabla f$ is an eigenvector for the Ricci operator asssociated to the eigenvalue $\lambda$.

From equation \eqref{eq:self-dual-basisB1} we have that $W(\nabla f,v,z,\nabla f)=W(u,w,z,\nabla f)$ and moreover that $C(\nabla f,v,z)=C(u,w,z)$, hence we use \eqref{eq:weyl2} to write
\[
0=W(\nabla f,v,z,\nabla f)-W(u,w,z,\nabla f)=\left(\frac{\tau}{6}-\frac{2}{3}\lambda\right) g(\nabla f,z)\,.
\]

Hence $\tau=4\lambda$ and $\tau$ is constant. But for $\tau$ constant, we get that $0=2\operatorname{Ric}(\nabla f)=2\lambda f$, so $\lambda=\tau=0$ and the gradient Ricci soliton is necessarily steady.
\end{proof}

\begin{lemma}\label{lemma:isotropic-nulldistributioneigenspace}
Let $(M,g,f)$ be an isotropic self-dual gradient Ricci soliton. Then there exists a $2$-dimensional null distribution $\mathcal{D}$ such that $\operatorname{Ric}(\mathcal{D})=0$ and the Ricci operator is two-step-nilpotent.
\end{lemma}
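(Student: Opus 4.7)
The plan is to pin down enough Ricci components in the null basis $\mathcal{B}=\{\nabla f,u,v,w\}$ to exhibit a second null direction in $\ker(\operatorname{Ric})$. I would first collect what we already know: Lemma~\ref{lemma:isotropic-steady} gives $\rho(\nabla f,\cdot)=0$ and $\tau=0$; and since the only nonzero inner products in $\mathcal{B}$ are $g(\nabla f,v)=g(u,w)=1$, expanding $\tau=g^{ab}\rho_{ab}$ yields $\tau=2\rho(\nabla f,v)+2\rho(u,w)$, hence $\rho(u,w)=0$. Thus the only a priori nonzero Ricci components are $\rho(u,u),\rho(u,v),\rho(v,v),\rho(v,w),\rho(w,w)$, and it suffices to prove $\rho(u,u)=\rho(u,v)=0$ to deduce $\operatorname{Ric}(u)=0$.

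The central idea will be to exploit the self-duality equation \eqref{eq:self-dual-basisB2}, $W(u,v,z,t)=0$. Substituting the Weyl decomposition \eqref{eq:weyl} (with $\tau=0$) converts this into the algebraic identity
\[
R(u,v,z,t)=\tfrac{1}{2}\{\rho(u,z)g(v,t)-\rho(v,z)g(u,t)+\rho(v,t)g(u,z)-\rho(u,t)g(v,z)\}
\]
valid for all $z,t$. Using the antisymmetries and pair-symmetry of $R$, together with the fact that only $g^{13}=g^{24}=1$ are nonzero, one checks the trace identity $\rho(u,u)=2R(\nabla f,u,u,v)$. Setting $(z,t)=(\nabla f,u)$ in the displayed formula gives $R(u,v,\nabla f,u)=-\tfrac12\rho(u,u)$, and the pair-symmetry $R(u,v,\nabla f,u)=R(\nabla f,u,u,v)$ then forces $\rho(u,u)=-\rho(u,u)$, i.e.\ $\rho(u,u)=0$. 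The same scheme applied to $\rho(u,v)$, this time with the substitutions $(z,t)=(v,\nabla f)$ and $(z,t)=(w,u)$, yields $\rho(u,v)=-\rho(u,v)=0$.

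Once $\operatorname{Ric}(u)=0$ is established, $\mathcal{D}=\operatorname{span}\{\nabla f,u\}$ is a $2$-dimensional null distribution on which the Ricci operator vanishes. For two-step nilpotency I would simply read off the Ricci operator in $\mathcal{B}$: a direct computation from the known components of $\rho$ gives
\[
\operatorname{Ric}(v)=\rho(v,v)\nabla f+\rho(v,w)u,\qquad \operatorname{Ric}(w)=\rho(v,w)\nabla f+\rho(w,w)u,
\]
both lying in $\mathcal{D}\subseteq\ker(\operatorname{Ric})$. Hence the image of $\operatorname{Ric}$ is contained in $\mathcal{D}$ and $\operatorname{Ric}^2=0$.

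The main obstacle, which dictates the choice of self-duality equation, is that a direct substitution of $\nabla f$ into \eqref{eq:weyl2} only produces Cotton-tensor identities, which involve \emph{derivatives} of $\rho$ rather than its algebraic values. The workaround is to expand the self-duality equation whose arguments do not contain $\nabla f$ via the full Weyl decomposition \eqref{eq:weyl}, converting it into an algebraic identity for $R(u,v,\cdot,\cdot)$ in terms of $\rho$, and then to feed this back into the trace definition of $\rho$. Note that an entirely analogous computation starting from \eqref{eq:self-dual-basisB3} would instead produce $\operatorname{Ric}(w)=0$ and identify the null plane as $\operatorname{span}\{\nabla f,w\}$, so either route proves the lemma.
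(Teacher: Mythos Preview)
Your approach has a genuine gap: the trace identity you invoke has the wrong sign, and once corrected it collapses to a tautology rather than a constraint.

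With the paper's conventions (which are fixed by the Weyl formula \eqref{eq:weyl}), contracting the first and third slots of $W$ gives $\rho(y,t)=g^{ab}R(e_a,y,e_b,t)$. In the null basis this yields
\[
\rho(u,u)=2R(\nabla f,u,v,u)=-2R(\nabla f,u,u,v),
\]
not $+2R(\nabla f,u,u,v)$. Your self-duality computation $R(u,v,\nabla f,u)=-\tfrac12\rho(u,u)$ is correct, and pair symmetry gives $R(\nabla f,u,u,v)=-\tfrac12\rho(u,u)$; but combining this with the corrected trace identity produces $\rho(u,u)=\rho(u,u)$, not $\rho(u,u)=-\rho(u,u)$. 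The same happens for $\rho(u,v)$. This is not an accident: inserting the Weyl decomposition into a Ricci trace simply reproduces the definition of $\rho$, so purely algebraic manipulation of \eqref{eq:weyl} together with $W(u,v,\cdot,\cdot)=0$ cannot force any Ricci component to vanish. Your side remark that \eqref{eq:self-dual-basisB3} would analogously give $\operatorname{Ric}(w)=0$ is likewise incorrect; indeed the paper later allows $\rho(w,w)=b$ to be nonzero in \eqref{eq:ricci-matrix}.

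What is missing is the gradient Ricci soliton structure. The paper's argument uses it through formula \eqref{eq:weyl2}, which rewrites $W(x,y,z,\nabla f)$ via Lemma~\ref{lemma:grs-formulas}(3) in terms of the Cotton tensor $C(x,y,z)$ plus purely algebraic Ricci/metric terms. Since $u\wedge v$ is anti-self-dual and $\Lambda^-$ is parallel, $W^-=0$ forces $(\nabla_\bullet W)(u,v,\cdot,\cdot)=0$ and hence $C(u,v,z)=(\operatorname{div}W)(u,v,z)=0$. Plugging this into \eqref{eq:weyl2} with $\tau=\lambda=0$ gives $0=W(u,v,z,\nabla f)=-\tfrac12\rho(u,z)$ for every $z$, so $\operatorname{Ric}(u)=0$ directly. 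You explicitly set this route aside as ``only producing Cotton-tensor identities,'' but that is precisely where the content lies: the soliton identity converts derivative information (the Cotton tensor) into the desired algebraic Ricci constraint.
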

\begin{proof}
From equation \eqref{eq:self-dual-basisB2} we have that $W(u,v,z,t)=0$ and, hence, $C(u,v,z)=0$. Now, from equation \eqref{eq:weyl2} we compute
\[
0=W(u,v,z,\nabla f)=\left(\frac\tau{6}-\frac{\lambda}{6}\right)g(u,z)-\frac{1}2\rho(u,z)\,.
\]
Since $\tau=\lambda=0$ from Lemma~\ref{lemma:isotropic-steady}, we get that $\rho(u,z)=0$ for any vector field $z$. Hence $\operatorname{Ric}(u)=0$.
Define the distribution $\mathcal{D}=\operatorname{span}\{\nabla f,u\}$ and note that $\operatorname{Ric}(\mathcal{D})=0$.
Moreover, as a consequence of this fact, the only possibly nonzero components of the Ricci tensor are $\rho(v,v)=a$, $\rho(w,w)=b$ and $\rho(v,w)=c$. Hence, the matrix associated to the Ricci operator in the local frame $\mathcal{B}$ becomes
\begin{equation}\label{eq:ricci-matrix}
\operatorname{Ric}=\left(\begin{array}{cccc}
0&0&a&c\\
0&0&c&b\\
0&0&0&0\\
0&0&0&0
\end{array}\right).
\end{equation}
Thus, $\operatorname{Ric}^2=0$.
\end{proof}

\begin{remark}
\rm
The distribution $\mathcal{D}$ in Lemma~\ref{lemma:isotropic-nulldistributioneigenspace} is self-dual and is well-defined. Associated to the null vector $\nabla f$, consider the $2$-dimensional
non-degenerate cocient subspace $\overline{\nabla f}^\perp=(\nabla f)^\perp/\operatorname{span}\{\nabla f\}$, which inherits a Lorentzian metric. Let $\overline{u}\in \overline{\nabla f}^\perp$ be a null vector field such that the $2$-plane $\sigma=\operatorname{span}\{\nabla f,u\}$ is self-dual (i.e, $\nabla f\wedge u$ is a self-dual $2$-form), where $u$ is an arbitrary representative on $\nabla f^\perp$ of $\overline{u}$. Observe that the distribution $\mathcal{D}$ coincides with $\sigma$ and is uniquely determined.

It is worth emphasizing that all our results are local so we may change self-duality by anti-self-duality in our analysis. However this choice of orientation determines the Walker structure of the manifold and viceversa.
\end{remark}

As a consequence of Lemma~\ref{lemma:isotropic-nulldistributioneigenspace} the image of $\operatorname{Ric}$ is totally isotropic:  $\operatorname{Im}(\operatorname{Ric})\subset \mathcal{D}$. Moreover, since the Ricci soliton is steady, the Hessian operator of $f$ defined by $\operatorname{hes}_f(x)=\nabla_x\nabla f$ satisfies $\operatorname{hes}_f=-\operatorname{Ric}$, and hence $\nabla_x \nabla f\in \mathcal{D}$ for all $x$. Also, we obtain that $\operatorname{Ric}\circ \operatorname{hes}_f=0$ and therefore $(\nabla_x \operatorname{Ric})(\nabla f)=0$.  From Lemma~\ref{lemma:grs-formulas} $R(x,y)\nabla f=(\nabla_x \operatorname{Ric})(y)-(\nabla_y \operatorname{Ric})(x)$ and then
\[
R(\nabla f,x)\nabla f=\left(\nabla_{\nabla f}\operatorname{Ric}\right)(x)
.
\]
We already know that the distribution $\mathcal{D}$ is totally isotropic, in the next lemma we show that it is also parallel and therefore $(M,g)$ is a Walker manifold.
\begin{lemma}\label{lemma:isotropic-walker}
Let $(M,g,f)$ be an isotropic self-dual gradient Ricci soliton of dimension $4$. Then $(M,g)$ is a Walker manifold.
\end{lemma}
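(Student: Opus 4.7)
The goal is to exhibit a parallel null $2$-plane distribution; the obvious candidate is $\mathcal{D}=\operatorname{span}\{\nabla f,u\}$ from Lemma~\ref{lemma:isotropic-nulldistributioneigenspace}. My plan is to verify parallelism directly, exploiting the fact that a totally isotropic plane of maximal rank in neutral signature is self-orthogonal ($\mathcal{D}=\mathcal{D}^\perp$), so $\nabla_x Y\in\mathcal{D}$ will hold as soon as the two identities $g(\nabla_x Y,\nabla f)=0$ and $g(\nabla_x Y,u)=0$ are established for every local section $Y$ of $\mathcal{D}$. It therefore suffices to check these for the two generators $Y=\nabla f$ and $Y=u$ of $\mathcal{D}$.

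For $Y=\nabla f$ the verification is immediate: steadiness (Lemma~\ref{lemma:isotropic-steady}) gives $\operatorname{hes}_f=-\operatorname{Ric}$, so
\[
\nabla_x\nabla f=-\operatorname{Ric}(x)\in\operatorname{Im}(\operatorname{Ric})\subset\mathcal{D},
\]
using the inclusion of the image of $\operatorname{Ric}$ inside $\mathcal{D}$ noted after Lemma~\ref{lemma:isotropic-nulldistributioneigenspace}. Hence this direction of $\mathcal{D}$ is preserved for free. For $Y=u$ the nullity of $u$ yields $g(\nabla_x u,u)=\tfrac12 x\bigl(g(u,u)\bigr)=0$, while $g(u,\nabla f)=0$ together with $\operatorname{Ric}(u)=0$ gives
\[
g(\nabla_x u,\nabla f)=-g\bigl(u,\nabla_x\nabla f\bigr)=g\bigl(u,\operatorname{Ric}(x)\bigr)=\rho(u,x)=0.
\]
Since $\mathcal{D}=\mathcal{D}^\perp$, these two identities force $\nabla_x u\in\mathcal{D}$.

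With both generators parallel into $\mathcal{D}$, the distribution $\mathcal{D}$ is itself parallel, and being totally null of maximal rank, it exhibits $(M,g)$ as a Walker manifold. I do not foresee any real obstacle here: the substantive input—namely $\operatorname{Ric}(\nabla f)=0$, $\operatorname{Ric}(u)=0$, $\operatorname{Im}(\operatorname{Ric})\subset\mathcal{D}$, and the steady identity $\operatorname{hes}_f=-\operatorname{Ric}$—has already been extracted from the self-duality relations in Lemmas~\ref{lemma:isotropic-steady} and \ref{lemma:isotropic-nulldistributioneigenspace}. Smoothness and rank of $\mathcal{D}$ are not issues on the open subset where $\nabla f\neq 0$, since the null frame $\mathcal{B}$ is defined there. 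The present lemma is thus essentially a bookkeeping consequence of the previous two, reading off the Walker structure from the algebraic shape of $\operatorname{Ric}$.
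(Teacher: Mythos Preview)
Your proof is correct and follows essentially the same approach as the paper: both show that $\mathcal{D}=\operatorname{span}\{\nabla f,u\}$ is parallel by using $\nabla_x\nabla f\in\operatorname{Im}(\operatorname{hes}_f)=\operatorname{Im}(\operatorname{Ric})\subset\mathcal{D}$ for the first generator, and then the self-orthogonality $\mathcal{D}=\mathcal{D}^\perp$ together with differentiation of $g(u,u)=0$ and $g(u,\nabla f)=0$ for the second. The only cosmetic difference is that you justify $g(u,\nabla_x\nabla f)=0$ via $\rho(u,x)=0$ explicitly, whereas the paper simply invokes $\nabla_x\nabla f\in\mathcal{D}$ and total isotropy.
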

\begin{proof}
We see that $\mathcal{D}=\operatorname{span}\{\nabla f,u\}$ is a parallel distribution as follows.
Since $\operatorname{Im}(\operatorname{hes}_f)\subset \mathcal{D}$, we have that $\nabla_x \nabla f\in \mathcal{D}$ for all $x$.
Since $\mathcal{D}$ is totally isotropic and $\mathcal{D}^\perp=\mathcal{D}$ we see that $\nabla_x u\in \mathcal{D}$ as follows:
\[
0=x\, g(u,u)=2 g(\nabla_x u,u)\,,
\]
\[
0=x\,g(u,\nabla f)=g(\nabla_xu,\nabla f)+g(u,\nabla_x \nabla f)=g(\nabla_xu,\nabla f)\,.
\]
Therefore $\nabla_x\mathcal{D}\subset\mathcal{D}$ for all vector fields $x$, which shows that $\mathcal{D}$ is parallel.
\end{proof}

Deformed Riemannian extensions are characterized among Walker manifolds by a curvature condition which is, in a certain sense, analogous to the curvature condition characterizing $pp$-waves among Brinkmann waves in Lorentzian signature~\cite{Leistner}.

\begin{theorem}{\rm \cite{afifi}}\label{th:riemann-extensions-characterization}
Let $(M,g)$ be a four-dimensional manifold admitting a $2$-dimen\-sion\-al parallel null distribution $\mathcal{D}$. Then $g$ is the deformed Riemannnian extension of an affine manifold if and only if
$R(x,\mathcal{D})\mathcal{D}=0$
for all vector fields $x$ on $M$.
\end{theorem}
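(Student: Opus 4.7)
The plan is to work in local Walker coordinates adapted to the parallel null distribution $\mathcal{D}$. Since $\mathcal{D}$ is a $2$-dimensional parallel null distribution on the $4$-manifold $(M,g)$, there exist local coordinates $(x^1,x^2,x^{1'},x^{2'})$ in which $\mathcal{D}=\operatorname{span}\{\partial_{x^{1'}},\partial_{x^{2'}}\}$ and
\[
g=2\,dx^i\,dx^{i'}+A_{ij}(x,x')\,dx^i\,dx^j,
\]
for some symmetric matrix $(A_{ij})$ of smooth functions, with $x=(x^1,x^2)$ and $x'=(x^{1'},x^{2'})$. In these coordinates, the deformed Riemannian extension $g_{D,\Phi}$ on $T^*\Sigma$ is singled out by the affine dependence $A_{ij}(x,x')=-2\,x^{k'}\,\Gamma^{k}_{ij}(x)+\Phi_{ij}(x)$, where the $\Gamma^{k}_{ij}$ are the Christoffel symbols of $D$ and the $\Phi_{ij}$ are the components of the symmetric deformation tensor $\Phi$ on $\Sigma$. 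The theorem therefore reduces to showing that $R(x,\mathcal{D})\mathcal{D}=0$ is equivalent to this affine dependence of $A_{ij}$ on the fiber variables.

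For the direct implication, I would verify in coordinates that if $g=g_{D,\Phi}$, then the only potentially nonzero Christoffel symbols of $g$ have at most one ``primed'' (vertical) lower index, and moreover $\partial_{x^{a'}}\partial_{x^{b'}}A_{ij}=0$. A direct calculation then yields $R(X,\partial_{x^{a'}})\partial_{x^{b'}}=0$ for every vector field $X$ and indices $a,b\in\{1,2\}$, which is exactly $R(x,\mathcal{D})\mathcal{D}=0$.

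For the converse, the central step is to expand $R(\partial_{x^i},\partial_{x^{a'}})\partial_{x^{b'}}$ in Walker coordinates. Because $\mathcal{D}$ is parallel and null, all Christoffel symbols with two or more primed lower indices vanish; the surviving ones can be expressed through $\partial_{x^{k'}}A_{ij}$ and related combinations. After the cancellations, $R(x,\mathcal{D})\mathcal{D}=0$ collapses to
\[
\partial_{x^{a'}}\partial_{x^{b'}}A_{ij}=0 \qquad \text{for all } i,j,a,b\in\{1,2\}.
\]
Consequently $A_{ij}$ is affine in $(x^{1'},x^{2'})$, and one may write $A_{ij}(x,x')=B^{k}_{ij}(x)\,x^{k'}+\Phi_{ij}(x)$ for smooth functions $B^{k}_{ij}$, $\Phi_{ij}$ on the base. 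Setting $\Gamma^{k}_{ij}(x):=-\tfrac12 B^{k}_{ij}(x)$, the symmetry $A_{ij}=A_{ji}$ forces $\Gamma^{k}_{ij}=\Gamma^{k}_{ji}$, so the $\Gamma^{k}_{ij}$ are Christoffel symbols of a torsion-free affine connection $D$ on the local leaf space $\Sigma$ of $\mathcal{D}$, and the $\Phi_{ij}$ assemble into a symmetric $(0,2)$-tensor $\Phi$ on $\Sigma$. Comparison with the defining formula of the deformed Riemannian extension gives $g=g_{D,\Phi}$ in these coordinates.

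The main obstacle is the curvature bookkeeping: one must carefully enumerate the surviving Christoffel symbols in Walker coordinates and confirm that $R(x,\mathcal{D})\mathcal{D}=0$ genuinely reduces to the second-order condition $\partial_{x^{a'}}\partial_{x^{b'}}A_{ij}=0$, with no further hidden constraints coming from the other components. A secondary subtlety is verifying that the coefficients $\Gamma^{k}_{ij}$ extracted from the coordinate expression transform correctly under the admissible coordinate changes preserving the Walker form, so that $D$ and $\Phi$ are intrinsic objects on the leaf space $\Sigma$ rather than coordinate artifacts.
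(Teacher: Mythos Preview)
The paper does not supply its own proof of this theorem: it is quoted verbatim from Afifi \cite{afifi} and used as a black box in the proof of Lemma~\ref{lema-nuevo-10}. So there is no argument in the paper to compare your proposal against.

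That said, your strategy is the standard one and is essentially correct. In Walker coordinates with $g=2\,dx^i\,dx^{i'}+A_{ij}\,dx^i\,dx^j$ one checks directly that all Christoffel symbols with two primed lower indices vanish, that $\Gamma^{k}_{ij'}=0$, and that $\Gamma^{k'}_{ij'}=\tfrac12\,\partial_{x^{j'}}A_{ik}$. A short computation then gives
\[
R(\partial_{x^i},\partial_{x^{a'}})\partial_{x^{b'}}
=\tfrac12\,\partial_{x^{a'}}\partial_{x^{b'}}A_{ik}\,\partial_{x^{k'}},
\]
while $R(\partial_{x^{c'}},\partial_{x^{a'}})\partial_{x^{b'}}=0$ automatically. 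Hence $R(x,\mathcal{D})\mathcal{D}=0$ is exactly $\partial_{x^{a'}}\partial_{x^{b'}}A_{ij}=0$, and the affine dependence of $A_{ij}$ on the fiber variables follows. Your identification of $\Gamma^{k}_{ij}=-\tfrac12 B^{k}_{ij}$ and of $\Phi$ is then immediate from comparison with \eqref{eq:5-1}. The only point you flag as a ``secondary subtlety''---that $D$ and $\Phi$ are well defined on the leaf space and not coordinate artifacts---is genuine but routine: it amounts to checking that the admissible Walker coordinate changes preserving the form of $g$ induce the usual transformation law for affine Christoffel symbols on $\Sigma$, which is precisely what Afifi verifies. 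Your outline would constitute a complete proof once the Christoffel and curvature computations are written out explicitly.
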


Now the first statement of Theorem~\ref{th:1}-(2) is proven as follows.

\begin{lemma}
\label{lema-nuevo-10}
Let $(M,g,f)$ be an isotropic non-trivial gradient Ricci soliton of dimension four. Then $(M,g)$ is the cotangent bundle of an affine surface $(\Sigma,D)$ equipped with the deformed Riemannian extension $g_{D,\Phi}$, where $\Phi$ is a symmetric $(0,2)$-tensor field on $\Sigma$.
\end{lemma}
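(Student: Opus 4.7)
The plan is to apply Theorem~\ref{th:riemann-extensions-characterization}, which characterises deformed Riemannian extensions among four-dimensional Walker manifolds by the curvature identity $R(X,\mathcal{D})\mathcal{D}=0$. Lemma~\ref{lemma:isotropic-walker} already supplies the Walker structure, the $2$-dimensional parallel null distribution being $\mathcal{D}=\operatorname{span}\{\nabla f,u\}$, so the entire task reduces to verifying the curvature condition $R(X,\mathcal{D})\mathcal{D}=0$ for every vector field $X$ on $M$.

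First I would reduce the verification to a finite list of scalar identities. Since $\mathcal{D}$ is parallel, $R(X,Y)$ preserves $\mathcal{D}$, so $R(X,Y)Z$ automatically lies in $\mathcal{D}$ whenever $Z\in\mathcal{D}$, and to show it vanishes it suffices to pair it with the complementary null vectors $v$ and $w$ (recall $g(\nabla f,v)=g(u,w)=1$). I would then produce these vanishing statements from two complementary sources. On one hand, Lemma~\ref{lemma:grs-formulas}(3) reads $R(X,Y,Z,\nabla f)=-(\nabla_X\rho)(Y,Z)+(\nabla_Y\rho)(X,Z)$; since $\rho$ vanishes identically on $\mathcal{D}$ by Lemma~\ref{lemma:isotropic-nulldistributioneigenspace} and $\nabla\mathcal{D}\subset\mathcal{D}$ by Lemma~\ref{lemma:isotropic-walker}, a direct expansion shows that $(\nabla_X\rho)(Y,Z)=0$ as soon as one of $Y$ or $Z$ lies in $\mathcal{D}$, which controls every component of $R(X,Y)\nabla f$ for $Y\in\mathcal{D}$. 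On the other hand, the self-duality conditions \eqref{eq:self-dual-basisB1}--\eqref{eq:self-dual-basisB3}, rewritten through the Weyl formula \eqref{eq:weyl} and simplified using $\tau=0$ from Lemma~\ref{lemma:isotropic-steady} together with $\rho(\nabla f,\cdot)=\rho(u,\cdot)=0$, collapse to the pure curvature identities $R(u,v,z,t)=0$ and $R(\nabla f,w,z,t)=0$ for every $z\in\mathcal{D}$ and every $t$, plus the identification of $R(\nabla f,v,\cdot,\cdot)$ with $R(u,w,\cdot,\cdot)$. By pair-exchange symmetry these convert into the vanishing of specific scalar components of $R(Z,t)u$ and $R(Z,t)\nabla f$ for $Z\in\mathcal{D}$, and combined with the first Bianchi identity they pin down the remaining components of $R(X,Y)Z$ with $Y,Z\in\mathcal{D}$.

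The main obstacle I anticipate is closing the $R(X,Y)u$ piece with $Y\in\mathcal{D}$: while $R(X,Y)\nabla f$ is controlled cleanly by Lemma~\ref{lemma:grs-formulas}(3), the companion statement for $u$ is only accessible indirectly, through the third self-duality condition \eqref{eq:self-dual-basisB1} and the first Bianchi identity, and one has to check that the covariant derivatives of $\rho$ that survive after the reductions really do land in $\mathcal{D}^\perp=\mathcal{D}$ and are killed by the full degeneracy of the Ricci operator ($\operatorname{Im}(\operatorname{Ric})\subset\mathcal{D}$ and $\operatorname{Ric}^2=0$ from Lemma~\ref{lemma:isotropic-nulldistributioneigenspace}). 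Once $R(X,\mathcal{D})\mathcal{D}=0$ has been verified, Theorem~\ref{th:riemann-extensions-characterization} gives immediately that $(M,g)$ is locally isometric to the cotangent bundle $T^*\Sigma$ of an affine surface $(\Sigma,D)$ equipped with a deformed Riemannian extension $g_{D,\Phi}$.
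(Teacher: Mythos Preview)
Your overall strategy---invoke the Walker structure from Lemma~\ref{lemma:isotropic-walker} and then verify $R(X,\mathcal{D})\mathcal{D}=0$ so as to apply Theorem~\ref{th:riemann-extensions-characterization}---is exactly the paper's. The gap is in your treatment of the $\nabla f$ piece, which you regard as the easy half. Lemma~\ref{lemma:grs-formulas}(3) reads $R(X,Y,Z,\nabla f)=-(\nabla_X\rho)(Y,Z)+(\nabla_Y\rho)(X,Z)$; your observation that $(\nabla_X\rho)(Y,Z)=0$ whenever $Y$ or $Z$ lies in $\mathcal{D}$ kills only the \emph{first} term. The surviving term $(\nabla_Y\rho)(X,Z)$, with $Y\in\mathcal{D}$ but $X,Z\notin\mathcal{D}$, is not forced to vanish by anything you have written. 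Concretely, $R(\nabla f,v,\nabla f,v)=(\nabla_{\nabla f}\rho)(v,v)=\nabla f(a)-2\rho(\nabla_{\nabla f}v,v)$, and neither summand is visibly zero from parallelism of $\mathcal{D}$ and $\operatorname{Ric}(\mathcal{D})=0$ alone. So the obstacle you anticipate only for the $u$ piece is already present for the $\nabla f$ piece.

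The paper does not push Lemma~\ref{lemma:grs-formulas}(3) further at this point. Instead it computes the offending components directly from $R(X,Y)\nabla f=\nabla_{[X,Y]}\nabla f-\nabla_X\nabla_Y\nabla f+\nabla_Y\nabla_X\nabla f$, using the explicit form of the Hessian $\nabla_X\nabla f=-\operatorname{Ric}(X)$ (which lands in $\mathcal{D}$ with the coefficients $a,b,c$ of \eqref{eq:ricci-matrix}) together with $\nabla_X u\in\mathcal{D}$. For the hardest component $R(\nabla f,v,\nabla f,v)$ it first uses the Ricci contraction $0=\rho(\nabla f,v)$ and \eqref{eq:self-dual-basisB3} to rewrite it as $R(v,w,\nabla f,u)$, and then evaluates that directly. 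Only after the $\nabla f$ components are settled in this hands-on way do the self-duality relations \eqref{eq:self-dual-basisB} transfer the vanishing to the $u$ components, much as you sketch. Your plan becomes viable once you replace the appeal to Lemma~\ref{lemma:grs-formulas}(3) by these direct computations using $\operatorname{hes}_f=-\operatorname{Ric}$.
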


\begin{proof}
As a consequence of Lemmas~\ref{lemma:isotropic-steady}, \ref{lemma:isotropic-nulldistributioneigenspace} and \ref{lemma:isotropic-walker} we know that $(M,g)$ is a Walker manifold. We work with the previously constructed local frame $\mathcal{B}$  to prove that $R(x,\mathcal{D})\mathcal{D}=0$ for any vector field $x$ and obtain the result as a consequence of Theorem~\ref{th:riemann-extensions-characterization}. More specifically,  we are going to see that $R(\nabla f,x,\nabla f,x)=R(\nabla f,x,u,y)=R(u,x,u,y)=0$ for any vector fields $x,y\in\{\nabla f, u, v,w\}$.

From expression \eqref{eq:weyl} we get that
\[
\begin{array}{rcl}
W(\nabla f,v,z,t)&=&R(\nabla f,v,z,t)+\frac{c}2\{g(u,z)g(\nabla f,t)-g(u,t)g(\nabla f,z)\}\,,\\
\noalign{\bigskip}
W(u,w,z,t)&=&R(u,w,z,t)+\frac{c}2\{g(\nabla f,z)g(u,t)-g(\nabla f,t)g(u,z)\}\,,\\
\noalign{\bigskip}
W(u,v,z,t)&=&R(u,v,z,t)+\frac{a}2\{g(\nabla f,z)g(u,t)-g(\nabla f,t)g(u,z)\}\,,\\
\noalign{\bigskip}
W(\nabla f,w,z,t)&=&R(\nabla f,w,z,t)+\frac{b}2\{g(u,z)g(\nabla f,t)-g(u,t)g(\nabla f,z)\}\,,
\end{array}
\]
where $a$, $b$ and $c$ are the function coefficients in \eqref{eq:ricci-matrix}.
From these expressions and equations \eqref{eq:self-dual-basisB1}, \eqref{eq:self-dual-basisB2} and \eqref{eq:self-dual-basisB3} we see that $(M,g)$ is self-dual if and only if the following relations hold for any vector fields $z$ and $t$:
\begin{equation}\label{eq:self-dual-basisB}
\begin{array}{c}
R(\nabla f,v,z,t)-R(u,w,z,t)+c\{g(u,z)g(\nabla f,t)-g(u,t)g(\nabla f,z)\}=0\,,\\
\noalign{\bigskip}
R(u,v,z,t)+\frac{a}2\{g(\nabla f,z)g(u,t)-g(\nabla f,t)g(u,z)\}=0\,,\\
\noalign{\bigskip}
R(\nabla f,w,z,t)+\frac{b}2\{g(u,z)g(\nabla f,t)-g(u,t)g(\nabla f,z)\}=0\,.
\end{array}
\end{equation}

We use these expressions to check directly that the following terms of the curvature vanish:
\begin{equation}\label{eq:0curvatureterms}
\begin{array}{l}
R(\nabla f,w,\nabla f,t)=0\text{ for any vector field } t,\\
\noalign{\medskip}
R(u,v,u,v)=0,\qquad R(u,v,u,w)=0\\
\noalign{\medskip}
R(u,v,\nabla f,v)=0,\qquad R(u,w,\nabla f,w)=0,\qquad R(u,v,\nabla f,w)=0.
\end{array}
\end{equation}

We begin by seeing that $R(\nabla f,x,\nabla f,y)=0$ for $x,y\in\{\nabla f, u, v,w\}$. So we compute:
\[
\begin{array}{rcl}
R(\nabla f,u,\nabla f,u)&=&g(\nabla_{[\nabla f,u]}\nabla f,u)-g(\nabla_{\nabla f}\nabla_u\nabla f-\nabla_u\nabla_{\nabla f}\nabla f,u)\\
\noalign{\medskip}
&=&g(\nabla_{[\nabla f,u]}\nabla f,u)= g(\nabla_u\nabla f,[\nabla f,u])=0\,.\\
\noalign{\bigskip}
R(\nabla f,u,\nabla f,v)&=&g(\nabla_{[\nabla f,u]}\nabla f,v)-g(\nabla_{\nabla f}\nabla_u\nabla f-\nabla_u\nabla_{\nabla f}\nabla f,v)\\
\noalign{\medskip}
&=& g(\nabla_{[\nabla f,u]}\nabla f,v)= g(\nabla_v\nabla f,[\nabla f,u])\\
\noalign{\medskip}
&=& g(-c\,\nabla f-b\,u,\nabla_{\nabla f}u)\\
\noalign{\medskip}
&=&-c\,g(\nabla f,\nabla_{\nabla f} u)-b\, g(u,\nabla_{\nabla f}u)\\
\noalign{\medskip}
&=&-c\, \nabla f\,g(\nabla f,u)+c\,g(\nabla_{\nabla f} \nabla f,u)-\frac{b}2\,\nabla f\,g(u,u)=0\,.
\end{array}
\]
Since 
\[
0=\rho(\nabla f,v)=R(\nabla f,u,v,w)+R(\nabla f,w,v,u)+R(\nabla f,v,v,\nabla f),
\] 
but $R(\nabla f,w,v,u)=0$ by \eqref{eq:0curvatureterms}, we have that
\[
\begin{array}{rcl}
R(\nabla f,v,\nabla f,v)&=& R(\nabla f,u,v,w)=R(v,w,\nabla f,u)\\
\noalign{\medskip}
&=& g((\nabla_{[v,w]}-\nabla_v\nabla_w+\nabla_w\nabla_v) \nabla f,u)\\
\noalign{\medskip}
&=& -g(\nabla_v\nabla_w\nabla f,u)+g(\nabla_w\nabla_v \nabla f,u)\\
\noalign{\medskip}
&=& g(\nabla_v(c\nabla f+b\, u),u)-g(\nabla_w(a\nabla f+cu),u)\\
\noalign{\medskip}
&=& v(c)g(\nabla f,u) +c\,g(\nabla_v\nabla f,u)+v(b) g(u,u)+b\,g(\nabla_vu,u)\\
\noalign{\smallskip}
&&-w(a)g(\nabla f,u)-a\,g(\nabla_w\nabla f,u)-w(c)g(u,u)-c\,g(\nabla_wu,u)\\
\noalign{\medskip}
&=&0\,.
\end{array}
\]
Thus we get that $R(\nabla f,x,\nabla f,y)=0$ for any $x,y\in\{\nabla f, u, v,w\}$.
On the one hand we have that $R(u,\nabla f,u,v)=\frac{1}2\rho(u,u)=0$ and, on the other hand, by \eqref{eq:self-dual-basisB}, we have that $R(u,\nabla f,u,w)=R(u,\nabla f,\nabla f,v)$ and that $R(u,w,u,w)=R(\nabla f,v,\nabla f,v)$. This, together with \eqref{eq:0curvatureterms}, implies that  $R(u,x,u,y)=0$ for any $x,y\in\{\nabla f, u, v,w\}$.

Finally, since $R(u,w,\nabla f,v)=R(\nabla f,v,\nabla f,v)$ by \eqref{eq:self-dual-basisB}, the terms computed above show that $R(u,x,\nabla f,y)=0$ for any $x,y\in\{\nabla f, u, v,w\}$, and thus $R(z,\mathcal{D})\mathcal{D}=0$ for arbitrary $z$.
\end{proof}

\section{Gradient Ricci solitons on Riemann Extensions. The proof of Theorem~\ref{th:1}-(2).}\label{se-5}

Since the underlying structure of any self-dual isotropic gradient Ricci soliton is that of a deformed Riemannian extension, we analyze the existence of gradient Ricci solitons on the cotangent bundle $T^*\Sigma$ of an affine surface $(\Sigma,D)$ equipped with the deformed Riemann extension $g_{D,\Phi}=g_D+\pi^*\Phi$, where $\Phi$ is a symmetric $(0,2)$-tensor field on $\Sigma$. 
This analysis ends up completing the proof of Theorem~\ref{th:1}-(2).

Let $(x^1,x^2)$ be local coordinates on $\Sigma$ and let $(x^1,x^2,x_{1'},x_{2'})$ be the induced coordinates on $T^*\Sigma$. Further, let ${}^D\Gamma_{ij}^k$ denote the Chrisfoffel symbols of the connection $D$ and let $\Phi_{ij}$ be the component functions of $\Phi$ in the coordinates $(x^1,x^2)$. Then the deformed Riemannian extension $g_{D,\Phi}=g_D+\pi^*\Phi$ expresses in the coordinates $(x^1,x^2,x_{1'},x_{2'})$ as
\begin{equation}
\label{eq:5-1}
g_{D,\Phi}=  dx^i\otimes dx_{i'}+dx_{i'}\otimes dx^i+\{\Phi_{ij}(x) -2x_{k'}{}^D\Gamma_{ij}{}^k\} dx^i\otimes dx^j\,.
\end{equation}

In order to simplify notation, we denote $\partial_{x^i}:=\frac{\partial}{\partial x^i}$, $\partial_{x_{i'}}:=\frac{\partial}{\partial x_{i'}}$ and $\partial^2_{x_{i'}x_{j'}}:=\frac{\partial^2}{\partial x_{i'}\partial x_{j'}}$ henceforth. The Levi-Civita connection of $g_{D,\Phi}$ is determined by the following possibly nonzero Christoffel symbols
\[
\begin{array}{l}
{\Gamma}_{ij}^k={}^D\Gamma_{ij}^k \, ,
\qquad\qquad
{\Gamma}_{{i'}j}^{{k'}}=-{}^D\Gamma_{jk}^i \, ,
\qquad\qquad
{\Gamma}_{i{j'}}^{{k'}}=-{}^D\Gamma_{ik}^j \, , \,\\
\noalign{\medskip}
\Gamma_{ij}^{k'}=\displaystyle
     \sum_{r=1}^2  x_{r'} \left(
    \partial_{x^k} {}^D\Gamma_{ij}^r - \partial_{x^i} {}^D\Gamma_{jk}^r - \partial_{x^j} {}^D\Gamma_{ik}^r
    + 2 \sum_{l=1}^2  {}^D\Gamma_{kl}^r {}^D\Gamma_{ij}^l
    \right)\\
\noalign{\medskip}
\phantom{\Gamma_{ij}^{k'}=}\displaystyle
    + \frac{1}{2}\left( \partial_{x^i} \Phi_{jk} + \partial_{x^j} \Phi_{ik} - \partial_{x^k} \Phi_{ij}\right)
    - \sum_{l=1}^2 \Phi_{kl} {}^D\Gamma_{ij}^l,
\end{array}
\]
where $i,j,k\in \{1,2\}$.

Now, a straightforward calculation shows that the only possibly nonzero components of the Ricci tensor of $g_{D,\Phi}$ are given by
\begin{equation}\label{eq:5-2}
\begin{array}{rcl}
\rho(\partial_{x^1},\partial_{x^1})
&=&2\rho_D^{sym}(\partial_{x^1},\partial_{x^1})\\
\noalign{\medskip}
&=&2\!\left\{{}\!^D\!\Gamma_{11}^1{}\!^D\!\Gamma_{12}^2\!-({}\!^D\!\Gamma_{12}^2)^2
\!+{}\!^D\!\Gamma_{11}^2({}\!^D\!\Gamma_{22}^2\!-{}\!^D\!\Gamma_{12}^1)
\!+\partial_{x^2}{}\!^D\!\Gamma_{11}^2
\!-\partial_{x^1}{}\!^D\!\Gamma_{12}^2 \!\right\}\!,\\
\noalign{\bigskip}
\rho(\partial_{x^1},\partial_{x^2})
&=&2\rho_D^{sym}(\partial_{x^1},\partial_{x^2})\\
\noalign{\medskip}
&=&2\!\left\{ {}\!^D\!\Gamma_{12}^1{}\!^D\!\Gamma_{12}^2
\!-{}\!^D\!\Gamma_{11}^2{}\!^D\!\Gamma_{22}^1\right\}
\!-\partial_{x^2}({}\!^D\!\Gamma_{11}^1\!-{}\!^D\!\Gamma_{12}^2)
\!+\partial_{x^1}({}\!^D\!\Gamma_{12}^1\!-{}\!^D\!\Gamma_{22}^2) ,\\
\noalign{\bigskip}
\rho(\partial_{x^2},\partial_{x^2})
&=&2\rho_D^{sym}(\partial_{x^2},\partial_{x^2})\\
\noalign{\medskip}
&=&2\!\left\{ ({}\!^D\!\Gamma_{11}^1{}\!^D
\!-{}\!^D\!\Gamma_{12}^2){}\!^D\!\Gamma_{22}^1
\!+{}\!^D\!\Gamma_{12}^1{}\!^D\!\Gamma_{22}^2\!-({}\!^D\!\Gamma_{12}^1)^2
\!-\partial_{x^2}{}\!^D\!\Gamma_{12}^1\!+\partial_{x^1}{}\!^D\!\Gamma_{22}^1 \! \right\}\!.
\end{array}
\end{equation}

The following lemma shows that the potential function of a soliton in $T^*\Sigma$ is given in terms of a function and a vector field on $\Sigma$.

\begin{lemma}\label{lemma:form-potential-funtion}
Let $(T^*\Sigma, g_{D,\Phi},f)$ be a gradient Ricci soliton. Then $f=\iota(x)+\pi^*h$ for some vector field $x\in\mathfrak{x}(\Sigma)$ and some function $h:\Sigma\to \mathbb{R}$.
\end{lemma}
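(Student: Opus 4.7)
The plan is to show that the second partial derivatives of $f$ with respect to the fiber coordinates vanish, which forces $f$ to be affine in $(x_{1'}, x_{2'})$; the linear part then reassembles into an evaluation map $\iota X$ while the constant term becomes the pull-back $\pi^\ast h$.

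First I would restrict the soliton equation $\operatorname{Hes}_f + \rho = \lambda\, g$ to pairs of vertical vector fields, that is to the parallel null distribution $\mathcal{D} = \ker\pi_\ast$, which is spanned locally by $\partial_{x_{1'}}$ and $\partial_{x_{2'}}$. From the coordinate expression \eqref{eq:5-1} one reads off $g(\partial_{x_{i'}},\partial_{x_{j'}}) = 0$, and from \eqref{eq:5-2} only components with both indices unprimed can be nonzero, so $\rho(\partial_{x_{i'}},\partial_{x_{j'}}) = 0$. Next I would inspect the list of Christoffel symbols: the only possibly nonzero symbols are $\Gamma_{ij}^k$, $\Gamma_{i'j}^{k'}$, $\Gamma_{ij'}^{k'}$ and $\Gamma_{ij}^{k'}$, so in particular $\nabla_{\partial_{x_{i'}}}\partial_{x_{j'}} = 0$. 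Substituting into the definition \eqref{eq:hessian} of the Hessian then gives $\operatorname{Hes}_f(\partial_{x_{i'}},\partial_{x_{j'}}) = \partial^2_{x_{i'}x_{j'}} f$, and the soliton equation reduces on $\mathcal{D}\times \mathcal{D}$ to
\[
\partial^2_{x_{i'}x_{j'}} f = 0, \qquad i,j\in\{1,2\}.
\]

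Integrating this system yields $f(x^1,x^2,x_{1'},x_{2'}) = a^1(x^1,x^2)\,x_{1'} + a^2(x^1,x^2)\,x_{2'} + h(x^1,x^2)$ for some smooth functions $a^1, a^2, h$ on $\Sigma$. Setting $X = a^i(x^1,x^2)\,\partial_{x^i}\in\mathfrak{x}(\Sigma)$, the definition of the evaluation map gives $\iota X(x^1,x^2,x_{1'},x_{2'}) = x_{i'}a^i(x^1,x^2)$, so that $f = \iota X + \pi^\ast h$, as desired. I do not anticipate any serious obstacle here: the whole argument rests on the observation that, for a deformed Riemannian extension, the vertical distribution $\mathcal{D}$ is both null and totally geodesic and carries no Ricci curvature, so the soliton equation restricted to $\mathcal{D}\times\mathcal{D}$ is purely the vanishing of the pure-fiber Hessian of $f$; the mildly subtle point is simply to note, from the explicit formulas \eqref{eq:5-1}, \eqref{eq:5-2} and the Christoffel data, that every term on both sides of the soliton equation collapses on $\mathcal{D}\times\mathcal{D}$ independently of the value of $\lambda$.
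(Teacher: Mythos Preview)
Your proposal is correct and follows essentially the same approach as the paper's own proof: restrict the soliton equation to the vertical directions $\partial_{x_{i'}},\partial_{x_{j'}}$, observe that the metric, the Ricci tensor, and the Christoffel symbols all vanish there so that the equation collapses to $\partial^2_{x_{i'}x_{j'}}f=0$, and then integrate to obtain $f=\iota X+\pi^\ast h$. The only difference is that you spell out explicitly why each term vanishes on $\mathcal{D}\times\mathcal{D}$, whereas the paper compresses this into a ``straightforward calculation''.
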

\begin{proof}
Let $f:T^*\Sigma\rightarrow\mathbb{R}$ be a function. A straightforward calculation shows that
$\displaystyle\operatorname{Hes}_f(\partial_{x_{i'}},\partial_{x_{j'}})
=\partial^2_{x_{i'}x_{j'}} f(x^1,x^2, x_{1'},x_{2'})$.
On the other hand, since $f$ is the potential function of a gradient Ricci soliton
(i.e., $\operatorname{Hes}_f+\rho=\lambda g_{D,\Phi}$), it follows immediately from the expressions of the metric and the Ricci tensor in \eqref{eq:5-1} and \eqref{eq:5-2} that
$\operatorname{Hes}_f(\partial_{x_{i'}},\partial_{x_{j'}})=0$ for all $i,j=1,2$. Hence the function $f$ expresses as
\begin{equation}
\label{eq:5-4}
f(x^1,x^2, x_{1'},x_{2'})= x_{1'}\pi^*X^1(x^1,x^2)+x_{2'}\pi^*X^2(x^1,x^2)+\pi^* h(x^1,x^2),
\end{equation}
for some smooth functions $X^1$, $X^2$ and $h$ on $\Sigma$.
Now, considering the vector field $x\in\mathfrak{X}(\Sigma)$ defined by
$x=X^i\partial_{x^i}$, the function $f$ is given by $f=\iota(x)+\pi^*h$.
\end{proof}

Assuming the vector field $x\neq 0$, one can specialize coordinates $(x^1,x^2)$ on $\Sigma$ so that $x=\partial_{x^1}$.
Hence, as a consequence of Lemma~\ref{lemma:form-potential-funtion}, the potential function $f$ takes the expression $f=x_{1'}+\pi^*h$ for some function $h$ on $\Sigma$.
Now, considering the following components of the Hessian of a function of the form $f=x_{1'}+\pi^*h$,
$$
\begin{array}{ll}
\displaystyle\operatorname{Hes}_f(\partial_{x^{1}},\partial_{x_{1'}})
={}^D\Gamma_{11}^1
&
\displaystyle\operatorname{Hes}_f(\partial_{x^{1}},\partial_{x_{2'}})
={}^D\Gamma_{11}^2\,,
\\
\noalign{\medskip}
\displaystyle\operatorname{Hes}_f(\partial_{x^{2}},\partial_{x_{1'}})
={}^D\Gamma_{12}^1\,,
&
\displaystyle\operatorname{Hes}_f(\partial_{x^{2}},\partial_{x_{2'}})
={}^D\Gamma_{12}^2\,,
\end{array}
$$
and the Ricci components given in \eqref{eq:5-2}, the Ricci soliton equation shows that
\begin{equation}\label{eq:5-5}
{}^D\Gamma_{11}^1={}^D\Gamma_{12}^2=\lambda,
\qquad
{}^D\Gamma_{11}^2={}^D\Gamma_{12}^1=0\,.
\end{equation}
Hence the only possibly non-zero components of the Ricci tensor \eqref{eq:5-2} are
\begin{equation}\label{eq:5-2b}
\rho(\partial_{x^1},\partial_{x^2})
= -\partial_{x^1}{}^D\Gamma_{22}^2 ,
\quad
\rho(\partial_{x^2},\partial_{x^2})
=2\,\partial_{x^1}{}^D\Gamma_{22}^1\,.
\end{equation}

\begin{lemma}\label{lemma:riemannian-extension-steady}
Any non-trivial gradient Ricci soliton $(T^*\Sigma, g_{D,\Phi},f)$ is steady.
\end{lemma}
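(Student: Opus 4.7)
My plan rests on two preliminary observations about deformed Riemannian extensions. First, their scalar curvature vanishes identically: the formulas \eqref{eq:5-2} show that the only possibly non-zero components of $\rho$ live in the horizontal-horizontal block, while the block form of $g_{D,\Phi}$ makes the horizontal block $g^{ij}$ of the inverse metric vanish, so $\tau=g^{ab}\rho_{ab}\equiv 0$. Second, by Lemma~\ref{lemma:form-potential-funtion} the soliton potential decomposes as $f=\iota(x)+\pi^*h$ for a vector field $x\in\mathfrak{X}(\Sigma)$ and a function $h\in\mathcal{C}^\infty(\Sigma)$. Inserting $\tau=0$ into Lemma~\ref{lemma:grs-formulas}(2) yields the conservation identity
\[
\|\nabla f\|^2-2\lambda f=\mathrm{const},
\]
which is the main tool of the argument.

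I would then distinguish two cases according to the decomposition of $f$. In the case $x=0$ one has $f=\pi^*h$, and a direct computation with the inverse metric shows that $\nabla f$ is a vertical vector field. Since the vertical distribution of a deformed Riemannian extension is totally isotropic (it is the parallel null 2-distribution $\ker\pi_*$), this gives $\|\nabla f\|^2\equiv 0$. The conservation identity collapses to $2\lambda\pi^*h=\mathrm{const}$; non-triviality of the soliton, i.e.\ non-constant $h$, then forces $\lambda=0$, as desired.

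The substantive case is $x\neq 0$. Specialising coordinates so that $x=\partial_{x^1}$ gives $f=x_{1'}+\pi^*h$ and the soliton equation has already produced the identities \eqref{eq:5-5}. A direct coordinate computation yields
\[
\|\nabla f\|^2 = 2\partial_{x^1}h - \Phi_{11} + 2\lambda x_{1'}.
\]
Inserting into the conservation law and separating powers of $x_{1'}$ makes the leading coefficients match automatically and leaves on $\Sigma$ the relation $2\partial_{x^1}h-\Phi_{11}-2\lambda h=\mathrm{const}$. Next, matching the $x_{k'}$-coefficients in the $(\partial_{x^i},\partial_{x^j})$-components of the Ricci soliton equation forces $\partial_{x^1}{}^D\Gamma_{22}^r=0$ for $r=1,2$, so by \eqref{eq:5-2b} the Ricci tensor of $g_{D,\Phi}$ vanishes identically and the soliton equation reduces to $\operatorname{Hes}_f=\lambda\,g_{D,\Phi}$.

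The main obstacle, which I expect to be the crux of the proof, is now to rule out such a gradient-homothetic configuration with $\lambda\neq 0$ on the Walker manifold $(T^*\Sigma,g_{D,\Phi})$. The plan here is to combine the condition $DX=\lambda I$ on the base (which already followed from \eqref{eq:5-5}) with the integrability of the parallel null 2-distribution $\mathcal{D}=\ker\pi_*$: the identity $\operatorname{Hes}_f=\lambda g_{D,\Phi}$ forces $\nabla_Y\nabla f=\lambda Y$ for every $Y$, and arguing with $Y\in\mathcal{D}$ together with the vanishing of the Ricci tensor and the earlier base relation for $h$ and $\Phi_{11}$ should reduce this case to the previous one, forcing $\lambda=0$.
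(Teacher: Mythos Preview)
Your argument is essentially correct and takes a genuinely different route from the paper.  The paper, in the case $x\neq 0$, assumes $\lambda\neq 0$, observes that then $\|\nabla f\|^2\neq 0$ on an open dense set, invokes Theorem~\ref{th:1}-(1) to conclude local conformal flatness, and uses a Weyl tensor component to obtain $\partial_{x^1}{}^D\Gamma_{22}^2=0$; only after that does it extract $\partial_{x^1}{}^D\Gamma_{22}^1=0$ from the $(\partial_{x^2},\partial_{x^2})$-component.  Your direct separation of the $x_{1'}$ and $x_{2'}$ coefficients in that same component yields both vanishing conditions at once, with no appeal to self-duality, the Weyl tensor, or the already-proved non-isotropic classification.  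This is cleaner and in fact establishes the slightly stronger statement that $x\neq 0$ forces Ricci-flatness regardless of the value of $\lambda$ (which the paper asserts but only argues under $\lambda\neq 0$).  Your treatment of the $x=0$ case via the conservation law $\|\nabla f\|^2-2\lambda f=\mathrm{const}$ is likewise different from, and arguably more conceptual than, the paper's direct evaluation of the $(\partial_{x^1},\partial_{x_{1'}})$-component.

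The one misstep is your final paragraph: there is no remaining ``crux''.  In this paper (and standardly) a gradient Ricci soliton is \emph{trivial} when the underlying metric is Einstein; here $\tau\equiv 0$, so Einstein is the same as Ricci-flat.  Once you have shown that $x\neq 0$ forces $\rho\equiv 0$, the soliton is trivial by definition, contradicting the hypothesis.  Hence non-triviality forces $x=0$, and your first case already gives $\lambda=0$.  You do not need to rule out a homothetic potential on a Ricci-flat Walker background; the definition of non-triviality does that for you.
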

\begin{proof}
Observe that $(T^*\Sigma,g_{D,\Phi})$ is self-dual for any affine connection $D$ and any symmetric $(0,2)$-tensor field $\Phi$. We adopt the notation of Lemma~\ref{lemma:form-potential-funtion} and assume first that $x$ is a nonzero vector field; hence we work with a potential function $f$ of the form $f=x_{1'}+\pi^*h$. Analyzing the causal character of $\nabla f$, we see from the expression of the Christoffel symbols in \eqref{eq:5-5} that $\|\nabla f\|^2=2\lambda x_{1'}+2\partial_{x^1} h(x^1,x^2)-\Phi_{11}(x^1,x^2)$. Hence, if $\lambda\neq 0$ we have that $\|\nabla f\|^2\neq 0$ on an open dense subset of $T^*\Sigma$
and thus Theorem~\ref{th:1} shows that $(T^*\Sigma,g_{D,\Phi})$ is indeed locally conformally flat. Now, a straightforward calculation of the Weyl tensor shows that
$W(\partial_{x^1},\partial_{x_{1'}},\partial_{x^2},\partial_{x^1})=\frac{1}{2}\partial_{ x^1}{}^D\Gamma_{22}^2$, from where it follows that 
${}^D\Gamma_{22}^2(x^1,x^2)={}^D\Gamma_{22}^2(x^2)$.

Considering again the Ricci soliton equation, we have
$$
\begin{array}{l}
0=
\operatorname{Hes}_f(\partial_{x^2},\partial_{x^2})+\rho(\partial_{x^2},\partial_{x^2})
-\lambda g_{D,\Phi}(\partial_{x^2},\partial_{x^2})\\
\noalign{\medskip}
\phantom{0}
=
\partial^2_{x^2 x^2} h
+{}^D\Gamma_{22}^2(\Phi_{12}-\partial_{x^2} h)-\lambda \Phi_{22}
-\partial_{x^2}\Phi_{12}
+{}^D\Gamma_{22}^1(\Phi_{11}-\partial_{x^1} h)
\\
\noalign{\medskip}
\phantom{0=}
+\frac{1}{2}\partial_{x^1}\Phi_{22}+(2-x_{1'})\partial_{x^1} {}^D\Gamma_{22}^1
\end{array}
$$
and taking the derivative with respect to $x_{1'}$, one has 
$\partial_{x^1} {}^D\Gamma_{22}^1=0$, Now, it follows from \eqref{eq:5-2b} that $(T^*\Sigma,g_{D,\phi})$ is Ricci-flat. This shows that any gradient Ricci soliton on $(T^*\Sigma,g_{D,\phi})$ with potential function $f=\iota x+\pi^* h$ is trivial if $x\neq 0$.
Assume now that the potential function is of the form $f=\pi^* h$.
Then a straightforward calculation shows that 
$$
\operatorname{Hes}_f(\partial_{x^1},\partial_{x_{1'}})+\rho(\partial_{x^1},\partial_{x_{1'}})
-\lambda g_{D,\Phi}(\partial_{x^1},\partial_{x_{1'}})=-\lambda=0,
$$
which shows that the corresponding Ricci soliton is necessarily steady.
\end{proof}

In the next lemma we show that the potential function of the soliton is indeed the pullback to $T^*\Sigma$ of a function on $\Sigma$.

\begin{lemma}\label{lemma:pullback-function}
Let $(T^*\Sigma, g_{D,\Phi},f)$ be a gradient Ricci soliton. Then $f=\pi^* h$ for a function $h:\Sigma\to\mathbb{R}$.
\end{lemma}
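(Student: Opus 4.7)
The plan is to combine Lemma~\ref{lemma:form-potential-funtion} with the geometric information about $\nabla f$ obtained from the isotropic analysis of Section~\ref{se-4}. By Lemma~\ref{lemma:form-potential-funtion}, $f=\iota(x)+\pi^\ast h$ for some vector field $x\in\mathfrak{X}(\Sigma)$ and some function $h\in\mathcal{C}^\infty(\Sigma)$, and the task reduces to showing $x\equiv 0$.

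In the isotropic setting of Theorem~\ref{th:1}-(2), Lemma~\ref{lemma:isotropic-walker} asserts that $\nabla f$ is a nonzero null vector field lying in the parallel null $2$-distribution $\mathcal{D}$ of the ambient Walker manifold. Under the isometry provided by Lemma~\ref{lema-nuevo-10}, this distribution is mapped to the canonical parallel null $2$-distribution of the deformed Riemannian extension, namely $\ker\pi_\ast$. Hence $\mathcal{D}=\ker\pi_\ast$ and, in particular, $\pi_\ast(\nabla f)\equiv 0$.

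It remains to translate this vanishing into a condition on $x$ via a direct coordinate computation. Writing $x=X^i\,\partial_{x^i}$, the potential function becomes $f=x_{1'}X^1(x)+x_{2'}X^2(x)+\pi^\ast h(x)$. The inverse of the metric~\eqref{eq:5-1} has the off-diagonal block structure $g^{ij}=0$, $g^{i\,j'}=\delta^{ij}$, $g^{i'j'}=-(g_{D,\Phi})_{ij}$, so the horizontal components of $\nabla f$ are $\partial_{x_{i'}}\!f=X^i$, giving
\[
\pi_\ast(\nabla f)=X^i\,\partial_{x^i}=x.
\]
The identity $\pi_\ast(\nabla f)=0$ therefore forces $x\equiv 0$, so $f=\pi^\ast h$. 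The essentially unique subtle point is the identification $\mathcal{D}=\ker\pi_\ast$, which is immediate from Lemma~\ref{lema-nuevo-10} together with the fact that $\ker\pi_\ast$ is the defining parallel null $2$-distribution of any (deformed) Riemannian extension; everything else is a routine computation.
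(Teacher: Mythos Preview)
Your argument has a genuine gap: you assume the soliton is isotropic, but the lemma does not carry that hypothesis. The statement concerns an \emph{arbitrary} gradient Ricci soliton on a deformed Riemannian extension $(T^*\Sigma,g_{D,\Phi})$, and this full generality is precisely what is needed for the ``only if'' direction of the characterization in Theorem~\ref{th:1}-(2). Nothing proved so far guarantees that such a soliton is isotropic; in fact, once $\lambda=0$ (Lemma~\ref{lemma:riemannian-extension-steady}) and $x\neq 0$, one computes $\|\nabla f\|^2=2\partial_{x^1}h-\Phi_{11}$, which is a function on $\Sigma$ and need not vanish. Hence the results of Section~\ref{se-4} do not apply directly, and invoking Lemma~\ref{lemma:isotropic-walker} or Lemma~\ref{lema-nuevo-10} is not justified.

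The paper instead argues computationally and independently of any causal assumption on $\nabla f$: assuming $x\neq 0$, one specializes coordinates so that $x=\partial_{x^1}$ and uses steadiness to obtain ${}^D\Gamma_{11}^1={}^D\Gamma_{12}^2={}^D\Gamma_{11}^2={}^D\Gamma_{12}^1=0$ from \eqref{eq:5-5}; the $(\partial_{x^2},\partial_{x^2})$-component of the soliton equation then forces $\partial_{x^1}{}^D\Gamma_{22}^1=\partial_{x^1}{}^D\Gamma_{22}^2=0$, so \eqref{eq:5-2b} gives $\rho=0$ and the soliton is trivial. Your approach does recover the narrower fact that, \emph{under the specific local isometry produced in Lemma~\ref{lema-nuevo-10}}, the original isotropic potential becomes a pullback (since Afifi's construction sends $\mathcal{D}$ to $\ker\pi_*$ by design). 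But that is strictly weaker than the lemma as stated, and even there you should make explicit that the identification $\mathcal{D}=\ker\pi_*$ is built into Afifi's theorem rather than appealing to Lemma~\ref{lema-nuevo-10} as a black box.
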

\begin{proof}
From Lemma~\ref{lemma:form-potential-funtion} we know that $f=\iota(x)+\pi^*h$ for a vector field $x\in\mathfrak{X}(\Sigma)$ and a function $h:\Sigma\to \mathbb{R}$. 

Assume $x\neq 0$, then we can specialize coordinates $(x^1,x^2)$ on $\Sigma$ so that $x=\partial_{x^1}$ as in previous lemma.
Hence $f=x_{1'}+\pi^*h$. Since $\lambda=0$ by Lemma~\ref{lemma:riemannian-extension-steady}, it follows from equations \eqref{eq:5-5} that ${}^D\Gamma_{11}^1={}^D\Gamma_{12}^2=
{}^D\Gamma_{11}^2={}^D\Gamma_{12}^1=0$.  
Considering now the steady Ricci soliton equation $\operatorname{Hes}_f+\rho=0$, it follows from the component
$\operatorname{Hes}_f(\partial_{x^2},\partial_{x^2})+\rho(\partial_{x^2},\partial_{x^2})=0$
that $\partial_{x^1}{}^D\Gamma_{22}^1=\partial_{x^1}{}^D\Gamma_{22}^2=0$, and thus \eqref{eq:5-2b} shows that $g_{D,\Phi}$ is Ricci flat. Therefore, the assumption $x\neq 0$ forces the soliton to be trivial and we conclude that the potential function reduces to $f=\pi^* h$.
\end{proof}

\begin{proof}[Proof of Theorem\ref{th:1}-(2).]
Let $(T^*\Sigma, g_{D,\Phi},f)$ be a gradient Ricci soliton.
By Lemma~\ref{lemma:pullback-function} the potential function $f$ reduces to $f=\pi^* h$ for some function $h\in\mathcal{C}^\infty(\Sigma)$. A straightforward calculation now shows that the possibly non vanishing terms of $\operatorname{Hes}_f$ are given by
\begin{equation}\label{eq:5-6}
\begin{array}{rcl}
\operatorname{Hes}_f(\partial_{x^1},\partial_{x^1})&=&\operatorname{Hes}^D_{h}(\partial_{x^1},\partial_{x^1}),\\
\noalign{\medskip} \operatorname{Hes}_f(\partial_{x^1},\partial_{x^2})&=&\operatorname{Hes}^D_{h}(\partial_{x^1},\partial_{x^2}),\\ 
\noalign{\medskip}
\operatorname{Hes}_f(\partial_{x^2},\partial_{x^2})&=&\operatorname{Hes}^D_{h}(\partial_{x^2},\partial_{x^2}).
\end{array}
\end{equation}
Since the soliton is steady by Lemma~\ref{lemma:riemannian-extension-steady}, expressions in Equations \eqref{eq:5-2} and \eqref{eq:5-6} show that the equation $\operatorname{Hes}_f+\rho=0$ is equivalent to $\operatorname{Hes}^D_h+2\rho_D^{sym}=0$, which finishes the proof.
\end{proof}

\begin{remark}
\rm
Observe that the existence of a gradient Ricci soliton on $(T^*\Sigma, g_{D,\Phi})$ is independent of the choice of the symmetric $(0,2)$-tensor field $\Phi$. Hence, any affine gradient Ricci soliton $(\Sigma, D,h)$ induces an infinite family of steady gradient Ricci solitons $(T^*\Sigma, g_{D,\Phi},\pi^*h)$. Moreover, all these gradient Ricci solitons are isotropic since $\|\nabla \pi^*h\|^2=0$.
\end{remark}

\begin{remark}\label{re:14}
\rm
The conformal structure of  $(T^*\Sigma,g_{D,\Phi})$ is related to the projective structure of the affine surface $(\Sigma,D)$. 
Indeed, it follows by a long but straightforward calculation of the Weyl tensor of $g_{D,\Phi}$ that the possibly non-zero components, up to curvature symmetries are
$$
\begin{array}{rcl}
W(\partial_{x^1},\partial_{x^2},\partial_{x^1},\partial_{x_{1'}})
&\!\!\!=\!\!\!&W(\partial_{x^1},\partial_{x^2},\partial_{x^2},\partial_{x_{2'}})=\frac{1}{2}\{\rho^D(\partial_{x^1},\partial_{x^2})-\rho^D(\partial_{x^2},\partial_{x^1})\}\\
\noalign{\medskip}
W(\partial_{x^1},\partial_{x^2},\partial_{x^1},\partial_{x^2})
&\!\!\!=\!\!\!& \Theta (\Phi)+x_{1'}\left\{(D_{\partial_{x^1}}\rho^D)(\partial_{x^2},\partial_{x^2})
-(D_{\partial_{x^2}}\rho^D)(\partial_{x^2},\partial_{x^1}) \right\}
\\
\noalign{\medskip}
&&\phantom{\Theta (\Phi)}+x_{2'}\left\{(D_{\partial_{x^2}}\rho^D)(\partial_{x^1},\partial_{x^1})
-(D_{\partial_{x^1}}\rho^D)(\partial_{x^1},\partial_{x^2}) \right\},
\end{array}
$$
where $\Theta$ is a polynomial on the derivatives of the components $\Phi_{ij}$ up to order two. It follows that if $(T^*\Sigma,g_{D,\Phi})$ is locally conformally flat, then the Ricci tensor $\rho^D$ of the affine connection $D$ and its covariant derivative $D\rho^D$ are symmetric, i.e., the affine connection $D$ is projectively flat with symmetric Ricci tensor. 
\end{remark}

\emph{Proof of Theorem~\ref{th:4D-lcf}.}
Locally conformally flat gradient Ricci solitons are locally warped products in the non-isotropic case, which also includes the Riemannian setting (note that the proof in \cite{BGG} remains valid for arbitrary signature) from where Assertion (1) follows.
Moreover, Theorem~\ref{th:4D-lcf}-(2) was proven in \cite{BGG} where it is shown that isotropic locally conformally flat Lorentzian gradient Ricci solitons are plane waves as specified. 
In order to proof Assertion (3), recall from Theorem \ref{th:1}
that any isotropic locally conformally flat gradient Ricci soliton is locally a deformed Riemannian extension in signature $(2,2)$. Moreover, it follows from the work of Afifi \cite{afifi} that any deformed Riemannian extension $g_{D,\Phi}$ which is locally conformally flat is indeed a Riemannian extension $g_D$ after a suitable change of coordinates on the affine surface $(\Sigma, D)$. Hence, such structure is a locally conformally flat gradient Ricci soliton if and only if it is the Riemannian extension of a projectively flat affine gradient Ricci soliton. \qed 

\section{Affine gradient Ricci solitons}\label{se-6}

In this section we analyze the existence of affine gradient Ricci solitons on affine surfaces, i.e.,  $(\Sigma, D, h)$ satisfying 
$
\operatorname{Hes}_h^D+2\rho_{sym}^D=0$
for some function $h:\Sigma\rightarrow\mathbb{R}$. As the results in the previous section showed,
$(\Sigma, D, h)$ is an affine gradient Ricci soliton if and only if $(T^*\Sigma, g_{D},\pi^*h)$ is a gradient Ricci soliton. It then follows from Lemma \ref{lemma:grs-formulas}-(3)
that
$R(x,y,\nabla f,z)=(\nabla_x \rho)(y,z)-(\nabla_y \rho)(x,z)$, and thus the curvature operator $R(x,y)$ satisfies
\begin{equation}
\label{eq:compatibility}
R(x,y)\nabla(\pi^* h)=(\nabla_x \operatorname{Ric})y-(\nabla_y \operatorname{Ric})x,
\end{equation}
for all vector fields $x$, $y$ on $T^*\Sigma$,
where $\operatorname{Ric}$ is the Ricci operator on  $(T^*\Sigma, g_{D})$. 
Furthermore, if at some point $\xi=(p,\omega)\in T^*\Sigma$ the vector fields $x$ and $y$ are given by
$x_\xi=\alpha_i\partial_{x^i}+\alpha_{i'}\partial_{x_{i'}}$ and
$y_\xi=\beta_i\partial_{x^i}+\beta_{i'}\partial_{x_{i'}}$, then the matrix of the
curvature operator $R(x,y)_\xi$ with respect
to the coordinate basis $\{\partial_{x^i},\partial_{x_{i'}}\}$ is of the form
\begin{equation}\label{eq: matriz IP extension de Riemann}
R(x,y)_\xi = \left(
\begin{array}{cc}
     R^D(\pi_*x,\pi_*y)_p & {0}
    \\
    {\ast} & - R^D(\pi_*x,\pi_*y)_p^t
\end{array}
\right),
\end{equation}
where $R^D(\pi_*x,\pi_*y)_p$ is the matrix of the curvature operator corresponding to
$\pi_*x=\alpha_i\partial_i$ and $\pi_*y=\beta_i\partial_i$ (see \cite{CLGRVL} for details). Moreover the curvature tensor is given by the Ricci as $ R(x,y)z=\rho^D(y,z)x-\rho^D(x,z)y$ for arbitrary vector fields $x,y,z$.
Now, since the gradient of the potential function $\pi^*h$ is given by
$\nabla(\pi^*h)=\partial_{x^1}h\,\partial_{x_{1'}}+\partial_{x^2}h\,\partial_{x_{2'}}$ it follows that 
$$
\begin{array}{rcl}
R(x,y)\nabla(\pi^*h)&=&\{\rho^D(\pi_*y,\pi_*x)\partial_{x^1}h+\rho^D(\pi_*y,\pi_*y)\partial_{x^2}h\}\partial_{x_{1'}}\\
\noalign{\medskip}
&&-\{\rho^D(\pi_*x,\pi_*x)\partial_{x^1}h+\rho^D(\pi_*x,\pi_*y)\partial_{x^2}h\}\partial_{x_{2'}}\,.
\end{array}
$$

This makes of special significance the cases when the curvature operator is invertible and when the affine surface $(\Sigma,D)$ is projectively flat. In the first case the potential function is characterized by the equation 
$\nabla(\pi^* h)=R(x,y)^{-1}[(\nabla_x \operatorname{Ric})y-(\nabla_y \operatorname{Ric})x]$,
while in the second case, it must satisfy $R(x,y)\nabla(\pi^* h)=0$. In particular no non-trivial affine gradient Ricci soliton may exist on a projectively flat affine surface with non-degenerate Ricci tensor.


\subsection{Projectively flat locally homogeneous affine gradient Ricci solitons}
Locally homogeneous affine surfaces were described by Opozda in \cite{Opozda}, who showed that if an affine surface $(\Sigma,D)$ is locally homogeneous
then either $D$ is the Levi-Civita connection of a surface with constant Gaussian curvature or,
otherwise, there exist local coordinates $(x^1,x^2)$  such that
$D$ has Christoffel symbols given by:
\begin{equation} \label{eq: Type-A}
\mbox{Type A:\quad ${}^D\Gamma_{ij}^k(x^1,x^2)=\gamma_{ij}^k$, or}
    \end{equation}
\begin{equation}\label{eq: Type-B}
   \mbox{Type B:\quad ${}^D\Gamma_{ij}^k(x^1,x^2)=\frac{1}{x^1}\gamma_{ij}^k$, }    
\end{equation}
for some constants $\gamma_{ij}^k$.

Recall that a connection $D$ on $\Sigma$ is locally homogeneous if and
only if, in a neighborhood of each point, there exist two linearly
independent affine Killing vector fields (i.e., vector fields whose local flows leave the connection invariant $\mathcal{L}_xD=0$).
If a connection is of Type A, then it has constant Christoffel
symbols in some local coordinates $(x^1,x^2)$ and hence $\{\partial_{x^1},\partial_{x^2}\}$ are linearly independent commuting affine Killing vector fields. Further, if a connection is of Type B, all Christoffel symbols are given by \eqref{eq: Type-B} in some local coordinates $(x^1,x^2)$
and one has that $\{x=\partial_{x^2},y=x^1\partial_{x^1}+x^2\partial_{x^2}\}$ are linearly independent affine Killing vector fields satisfying $[x,y]=x$. 
The converse statement holds true, and types A and B above are characterized by the existence of linearly independent affine Killing vector fields $x$, $y$ satisfying $[x,y]=0$ and $[x,y]=x$, respectively \cite{AM-K}.
It is worth emphasizing here that types A and B are not disjoint. Indeed, their intersection was studied in \cite{CLGRVL}, where it is shown that any Type B connection which is projectively flat with symmetric, degenerate and recurrent Ricci tensor is also of Type A.

Any locally homogeneous affine connection of Type A has symmetric Ricci tensor and is projectively flat 
(cf. \cite{CLGRVL}). Hence, the compatibility condition \eqref{eq:compatibility} shows that affine gradient Ricci solitons with curvature operator (equivalenly Ricci tensor) of rank two are trivial. We assume the curvature operator has rank-one from now on. 
Since the Christoffel symbols are constant, one may use a linear transformation to ensure that the only non-zero component of the Ricci tensor is $\rho^D(\partial_{x^1},\partial_{x^1})$, which is equivalent to the equations
$$
\begin{array}{l}
\rho^D(\partial_{x^1},\partial_{x^2})=\gamma_{12}^1\gamma_{12}^2-\gamma_{11}^2\gamma_{22}^1=0,
\\
\noalign{\medskip}
\rho^D(\partial_{x^2},\partial_{x^2})=\gamma_{22}^1(\gamma_{11}^1-\gamma_{12}^2) +\gamma_{12}^1(\gamma_{22}^2-\gamma_{12}^1)=0.
\end{array}
$$

A straightforward calculation shows that the non-flat solutions are given by setting  $\gamma_{12}^1=0$ and  $\gamma_{22}^1=0$ at \eqref{eq: Type-A}. Moreover, in this case it follows from \eqref{eq:compatibility} that the potential function of the soliton is of the form $h(x^1)$, where it is a solution of the linear equation
$$
h''(x^1)- \gamma_{11}^1 h'(x^1)= -2 \rho^D(\partial_{x^1},\partial_{x^1})
= -2(\gamma_{11}^1\gamma_{12}^2+\gamma_{11}^2\gamma_{22}^2-(\gamma_{12}^2)^2).
$$
This shows that a homogeneous affine connection of Type A is a non-trivial affine gradient Ricci soliton if and only if the Ricci tensor is of rank one. 

\medskip

Next we consider Type B locally homogeneous affine connections given by \eqref{eq: Type-B}.
Let $(\Sigma, D)$ be such an affine surface. A simple observation of the Weyl tensor of the Riemannian extension $(T^*\Sigma, g_D)$ as in Remark \ref{re:14} shows that $(\Sigma, D)$ is projectively flat if and only if the Ricci tensor is symmetric (i.e., $\gamma_{12}^1=-\gamma_{22}^2$ in \eqref{eq: Type-B})
and the covariant derivative of the Ricci tensor satisfies $(D_x\rho^D)(y,z)=(D_y\rho^D)(x,z)$, 
i.e., the Christoffel symbols satisfy the equations
\begin{equation}
\label{eq:typeB-pll-1}
2 \gamma_{12}^1\gamma_{12}^2-3\gamma_{11}^2\gamma_{22}^1+(2+\gamma_{11}^1)\gamma_{22}^2=0, \; 
(\gamma_{12}^1)^2-2\gamma_{12}^1\gamma_{22}^2+\gamma_{22}^1(1-\gamma_{11}^1+2\gamma_{12}^2)=0\,.
\end{equation}

Then a Type B locally homogeneous affine connection is projectively flat if and only if $\gamma_{12}^1=-\gamma_{22}^2$ and one of the following holds
\begin{enumerate}
\item[(i)] $\gamma_{12}^1=0$, $\gamma_{22}^1=0$, or
\smallskip
\item[(ii)] $\gamma_{12}^2=\frac{1}{2}(\gamma_{11}^1-1-3\frac{(\gamma_{12}^1)^2}{\gamma_{22}^1})$, $\gamma_{11}^2=-\frac{(\gamma_{12}^1)^3}{(\gamma_{22}^1)^2}-\frac{\gamma_{12}^1}{\gamma_{22}^1}$, $\gamma_{22}^1\neq 0$\,.
\end{enumerate}

The Ricci tensor of a connection given by $(i)$ above is of rank one and, moreover, it follows after some calculations that it is recurrent  ($D\rho=\omega\oplus \rho$ with $\omega=-\frac{2}{x^1}(1+\Gamma_{11}^1)dx^1$). Hence, any Type B connection given by $(i)$ is also of Type A, and thus an affine gradient Ricci soliton.

Let now $D$ be a Type B connection given by $(ii)$. Since $D$ is projectively flat, non-trival affine gradient Ricci solitons may occur only if the Ricci tensor is degenerated, which in terms of the Christoffel symbols is equivalent to
$$
\left( (\gamma_{12}^1)^2+\gamma_{22}^1-\gamma_{11}^1\gamma_{22}^1\right)
\left( (\gamma_{12}^1)^2-(3+\gamma_{11}^1)\gamma_{22}^1\right)=0.
$$
The condition $(\gamma_{12}^1)^2+\gamma_{22}^1-\gamma_{11}^1\gamma_{22}^1=0$, implies that the Ricci tensor vanishes. Therefore we assume that 
$(\gamma_{12}^1)^2-(3+\gamma_{11}^1)\gamma_{22}^1=0$. Computing again the Ricci tensor, one has
$$
\rho^D=-\frac{2}{(x^1)^2}
\left(\begin{array}{cc}
\frac{(\gamma_{12}^1)^2}{\gamma_{22}^1}&\gamma_{12}^1\\
\noalign{\medskip}
\gamma_{12}^1&\gamma_{22}^1
\end{array}\right).
$$
Let $h(x^1,x^2)$ be a smooth function on $\Sigma$. Considering the compatibility condition \eqref{eq:compatibility} one has that $\gamma_{22}^1\partial_{x^1}h-\gamma_{12}^1\partial_{x^2}h=0$, and thus 
 $\partial_{x^1}h=\frac{\gamma_{12}^1}{\gamma_{22}^1}\partial_{x^2}h$.
 Now the different components on the Ricci soliton equation 
$\operatorname{Hes}_h^D+2\rho_{sym}^D=0$ become
$$
\begin{array}{rcl}
\operatorname{Hes}_h^D(\partial_{x^2},\partial_{x^2})+2\rho_{sym}^D(\partial_{x^2},\partial_{x^2})
&=&\partial^2_{x^2 x^2}h-\frac{4}{(x^1)^2}\gamma_{22}^1\,,
\\
\noalign{\medskip}
\operatorname{Hes}_h^D(\partial_{x^1},\partial_{x^2})+2\rho_{sym}^D(\partial_{x^1},\partial_{x^2})
&=&\partial^2_{x^1 x^2}h+\frac{2}{x^1}\partial_{x^2}h-\frac{4}{(x^1)^2}\gamma_{12}^1\,,
\\
\noalign{\medskip}
\operatorname{Hes}_h^D(\partial_{x^1},\partial_{x^1})+2\rho_{sym}^D(\partial_{x^1},\partial_{x^1})
&=&\partial^2_{x^1 x^1}h+\frac{4}{x^1}\partial_{x^1}h
-\frac{4(\gamma_{12}^1)^2}{(x^1)^2\gamma_{22}^1}\,.
\end{array}
$$
The first equation above shows that $\partial^2_{x^2x^2}h=\frac{4}{(x^1)^2}\gamma_{22}^1$, and thus $\partial^2_{x^1x^2}h=\frac{4}{(x^1)^2}\gamma_{12}^1$. Then the second equation above shows that  $\partial_{x^2}h=0$, which implies that $h$ is indeed a constant. This shows that Type B homogeneous connections given by $(ii)$ result in trivial affine gradient Ricci solitons.

Summarizing the above, we have

\begin{theorem}
\label{th:homogeneous-agrs}
Let $(\Sigma,D)$ be a projectively flat locally homogeneous affine surface. Then it is a non-trivial affine gradient Ricci soliton if and only if the Christoffel symbols are constant in suitable coordinates and the Ricci tensor is of rank one.
\end{theorem}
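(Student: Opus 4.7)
The plan is to invoke Opozda's classification of locally homogeneous affine surfaces, so that $(\Sigma,D)$ falls into one of three exclusive-but-overlapping families: the Levi-Civita connection of a constant Gaussian curvature metric, a Type A connection (constant Christoffel symbols in some coordinates), or a Type B connection (Christoffel symbols of the form $\gamma_{ij}^k/x^1$). I would then handle each family separately, combining the forward and backward implications at the end.

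The constant curvature case is immediate: the Ricci tensor is a multiple of the metric and hence has rank $0$ or $2$; using the compatibility identity \eqref{eq:compatibility} together with the form \eqref{eq: matriz IP extension de Riemann} of the curvature operator on $T^*\Sigma$, the nondegeneracy of $\rho^D$ in the rank-two case forces $\nabla(\pi^*h)=0$, so only trivial solitons arise. For Type A, I would follow the computation already sketched in the excerpt: after a linear change of variables that preserves the constant-Christoffel form, reduce to $\rho^D(\partial_{x^2},\partial_{x^2})=\rho^D(\partial_{x^1},\partial_{x^2})=0$, show that these algebraic conditions force $\gamma_{12}^1=\gamma_{22}^1=0$, and observe that the Ricci soliton equation then becomes the linear ODE
\[
h''(x^1)-\gamma_{11}^1 h'(x^1)=-2\rho^D(\partial_{x^1},\partial_{x^1}),
\]
which always admits non-constant solutions. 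Thus Type A produces genuine affine gradient Ricci solitons exactly in the rank-one case, while the rank-two case is killed as above.

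For Type B, I would first impose the symmetry $\gamma_{12}^1=-\gamma_{22}^2$ (needed since projective flatness forces $\rho^D$ to be symmetric) and then use the two polynomial relations \eqref{eq:typeB-pll-1} coming from $D\rho^D$ being totally symmetric. Solving these algebraically, following the excerpt, yields exactly two branches: branch (i), in which $\gamma_{12}^1=\gamma_{22}^1=0$ and a direct computation of $D\rho^D$ shows $\rho^D$ is recurrent, so the connection is in fact of Type A and already covered; and branch (ii), which I would treat by computing $\rho^D$ explicitly, noting the rank-one condition $(\gamma_{12}^1)^2=(3+\gamma_{11}^1)\gamma_{22}^1$, and using \eqref{eq:compatibility} to get the first-order relation $\partial_{x^1}h=(\gamma_{12}^1/\gamma_{22}^1)\partial_{x^2}h$. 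Feeding this back into the three components of $\operatorname{Hes}^D_h+2\rho^D_{sym}=0$, eliminating the mixed derivative and the $\partial^2_{x^2 x^2}h$ term, should collapse the system to $\partial_{x^2}h=0$ and then $h=\mathrm{const}$.

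The main obstacle will be branch (ii) of Type B, since the calculation is coordinate-heavy and one must verify that no non-trivial solution slips through: the potential function cannot simply be a function of $x^1$ because the cross-term equation couples $\partial_{x^1}h$ and $\partial_{x^2}h$ with nonconstant coefficients in $x^1$. Everything else is organization, the classification theorem, and linear algebra; once branch (ii) is shown to force $h$ constant, the theorem follows by combining the Type A existence result with the exclusion of the other cases, giving the stated if-and-only-if.
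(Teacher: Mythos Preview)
Your proposal is correct and follows essentially the same route as the paper: Opozda's classification, the rank-two exclusion via the compatibility condition \eqref{eq:compatibility}, the Type~A linear ODE for $h(x^1)$ after normalizing so that $\gamma_{12}^1=\gamma_{22}^1=0$, and the Type~B split into branches (i) (recurrent, hence Type~A) and (ii) (where $\partial_{x^1}h=(\gamma_{12}^1/\gamma_{22}^1)\partial_{x^2}h$ combined with the three Hessian components forces $h$ constant). Your only addition is treating the constant-curvature Levi-Civita case explicitly, which the paper absorbs into the rank-two projectively-flat argument.
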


\begin{remark}\label{re:homogeneous-agrs-1}
\rm
As a consequence of the previous analysis one has that the kernel of the Ricci tensor of any projectively flat homogeneous affine gradient Ricci soliton is one-dimensional (there exist coordinates $(x^1,x^2)$ where $\operatorname{ker}\rho^D=\operatorname{span}\{\partial_{x^2}\}$) and moreover it is parallel
(since ${}^D\Gamma_{12}^1(x^1,x^2)=0$ and ${}^D\Gamma_{22}^1(x^1,x^2)=0$).
\end{remark}

\begin{remark}\rm \label{re:homogeneous-agrs-2}
An affine surface $(\Sigma,D)$ is locally symmetric if the connection satisfies $D\rho^D=0$. Any locally symmetric connection is projectively flat and has symmetric Ricci tensor \cite{Op1}. Hence it follows from Theorem \ref{th:homogeneous-agrs} that a locally symmetric affine surface is a non-trivial affine gradient Ricci soliton if and only if the Ricci tensor has rank one.
Any such surface can be described, in adapted coordinates $(x^1,x^2)$ where the Christoffel symbols are constant as in \eqref{eq: Type-A}, by the constrains $\gamma_{11}^1=0$, $\gamma_{12}^1=0$, $\gamma_{22}^1=0$, and the potential function of the soliton is given by the equation $h''(x^1)= -2 \rho^D(\partial_{x^1},\partial_{x^1})$. 

Note that while Riemannian extensions $g_D$ of a locally symmetric affine connection $D$ are locally symmetric, deformed Riemannian extensions $g_{D,\Phi}$ are not symmetric for a general $\Phi$.
\end{remark}

\begin{remark}
\rm
Although the examples of locally homogeneous affine gradient Ricci solitons in Theorem~\ref{th:homogeneous-agrs} are projectively flat by hypothesis, one can build examples which are not projectively flat from manifolds of Type B above. Let $(\Sigma,D)$ be an affine surface given by \eqref{eq: Type-B} and specify the Christoffel symbols so that the only non-zero ones are given by  ${}^D\Gamma_{11}^1(x^1,x^2)=-\frac{1}{x^1}$ and ${}^D\Gamma_{22}^1(x^1,x^2)=\frac{\gamma_{22}^1}{x^1}$. A straightforward calculation shows that it is an affine gradient Ricci soliton with potential function 
$h(x^1,x^2)=-4\ln(x^1)+\alpha x^2+\beta$. Observe that $(\Sigma,D)$ has Ricci tensor symmetric of rank one \cite{CLGRVL} and recurrent with $\omega=-\frac{2}{x^1}dx^1$.
\end{remark}

\subsection{Non-projectively flat affine gradient Ricci solitons}

Through this section we will examine a special family of affine surfaces $(\Sigma,D)$ which generalizes that of locally symmetric affine connections and results in new examples of non-projectively flat affine gradient Ricci solitons. We consider affine connections whose Ricci tensor is symmetric of rank one and such that the kernel of $\rho^D$ is  parallel. These connections were described by Opozda \cite{Op1}, who showed the existence of adapted coordinates $(x^1,x^2)$ where the only nonzero Christoffel symbols are
\begin{equation}\label{eq:proj-flat-1}
{}^D\Gamma_{12}^1 \quad \mbox{and} \quad {}^D\Gamma_{22}^1\,\quad \mbox{where} \quad \partial_{x^1}{}^D\Gamma_{12}^1=0 \,.
\end{equation}
Moreover, the connection $D$ is projectively flat if and only if 
$\partial^2_{x^1 x^1}{}^D\Gamma_{22}^1=0$ and it is locally symmetric if and only if 
$\partial^2_{x^1 x^1}{}^D\Gamma_{22}^1=0$ and 
$\partial^2_{x^1 x^2}{}^D\Gamma_{22}^1=\partial^2_{x^2 x^2}{}^D\Gamma_{12}^1
+2\,\Gamma_{12}^1\partial_{x^2}{}^D\Gamma_{12}^1$.

Let $h(x^1,x^2)$ be an arbitrary function on $\Sigma$. Then the Ricci soliton equation $\operatorname{Hes}^D_h+2\rho^D_{sym}=0$ reduces to
\begin{equation}
\label{eq:proj-flat-2}
\begin{array}{rcl}
\displaystyle
\operatorname{Hes}^D_h(\partial_{x^1},\partial_{x^1})+2\rho^D_{sym}(\partial_{x^1},\partial_{x^1})
&=&\displaystyle\partial^2_{x^1 x^1}h\,,\\
\noalign{\medskip}
\displaystyle
\operatorname{Hes}^D_f(\partial_{x^1},\partial_{x^2})+2\rho^D_{sym}(\partial_{x^1},\partial_{x^2})
&=&\displaystyle\partial^2_{x^1 x^2}h-{}^D\Gamma_{12}^1 \partial_{x^1}h\,,\\
\noalign{\medskip}
\displaystyle
\operatorname{Hes}^D_f(\partial_{x^1},\partial_{x^2})+2\rho^D_{sym}(\partial_{x^2},\partial_{x^2})
&=&\displaystyle\partial^2_{x^2 x^2}h-{}^D\Gamma_{22}^1\partial_{x^1}h
+2\partial_{x^1}{}^D\Gamma_{22}^1\\
\noalign{\medskip}
&&\displaystyle\phantom{\partial^2_{x^2 x^2}h}
-2\partial_{x^2}{}^D\Gamma_{12}^1-2({}^D\Gamma_{12}^1)^2
\,.\end{array}
\end{equation}

First of all, observe that if $D$ is projectively flat, then for any function of the form $h(x^2)$, the system \eqref{eq:proj-flat-2} reduces to the equation
$$
h''(x^2)=2\partial_{x^2}{}^D\Gamma_{12}^1(x^2)+2({}^D\Gamma_{12}^1(x^2))^2
-2\partial_{x^1}{}^D\Gamma_{22}^1(x^1,x^2),
$$ 
which is meaningful, in the sense that the equation always has local solution and therefore provides new examples, since $\partial^2_{x^1x^1}{}^D\Gamma_{22}^1(x^1,x^2)=0$ due to projective flatness.

In the general situation, it follows from the first equation in \eqref{eq:proj-flat-2} that the potential function $h$ splits as $h(x^1,x^2)=\tilde{h}(x^2)+x^1\hat{h}(x^2)$. Moreover, the second equation in \eqref{eq:proj-flat-2} now shows that the function $\hat{h}$ is completely determined by the linear homogeneous equation $\hat{h}'(x^2)={}^D\Gamma_{12}^1(x^2)\hat{h}(x^2)$, and thus
$\hat{h}(x^2)=\kappa e^{\Xi_{12}^1(x^2)}$, where $\Xi_{12}^1(x^2)$ is a primitive of ${}^D\Gamma_{12}^1(x^2)$ and $\kappa\in\mathbb{R}$.
Now, considering the third equation above, one has
$$
\begin{array}{rcl}
\tilde{h}''(x^2)&=&2({}^D\Gamma_{12}^1(x^2))^2
+2\partial_{x^2}{}^D\Gamma_{12}^1(x^2)\\
\noalign{\medskip}
&&+\hat{h}(x^2){}^D\Gamma_{22}^1(x^1,x^2)-2\partial_{x^1}{}^D\Gamma_{22}^1(x^1,x^2)-x^1 \hat{h}''(x^2)\,.
\end{array}
$$

The necessary and sufficient condition so that the equation above is meaningful is obtained by taking the derivative with respect to $x^1$: 
\begin{equation}
\label{eq:proj-flat-2b}
\hat{h}''(x^2)-\hat{h}(x^2)\partial_{x^1}{}^D\Gamma_{22}^1(x^1,x^2)+2\partial^2_{x^1x^1}{}^D\Gamma_{22}^1(x^1,x^2)=0.
\end{equation}
 Now, assuming $\hat{h}(x^2)\neq 0$, one has that the previous equation determines the Christoffel symbol ${}^D\Gamma_{22}^1(x^1,x^2)$ as follows
\begin{equation}
\label{eq:proj-flat-3}
\begin{array}{rcl}
{}^D\Gamma_{22}^1(x^1,x^2)&=&
\displaystyle\frac{2\alpha(x^2)}{\kappa} e^{\left(\displaystyle\frac{1}{2}\kappa x^1 e^{\Xi_{12}^1(x^2)}-\Xi_{12}^1(x^2)\right)}+\beta(x^2)\\
\noalign{\medskip}
&&\displaystyle+x^1\left(({}^D\Gamma_{12}^1(x^2))^2+\partial_{x^2}{}^D\Gamma_{12}^1(x^2)\right),
\end{array}
\end{equation}
for arbitrary functions $\alpha(x^2)$, $\beta(x^2)$.
Moreover, in this case the Ricci soliton equation \eqref{eq:proj-flat-1} reduces to 
\begin{equation}
\label{eq:proj-flat-4} 
{\tilde{h}''(x^2)=\kappa\beta(x^2)e^{\Xi_{12}^1(x^2)}=\beta(x^2)\hat h(x^2)}.
\end{equation}

Summing up the above, we have

\begin{theorem}
\label{th:proj-flat-0}
Let $(\Sigma,D)$ be an affine surface whose Ricci tensor is  symmetric of rank one and has parallel kernel. Then it is an affine gradient Ricci soliton if and only if either it is projectively flat or, otherwise there exist coordinates $(x^1,x^2)$ where the non-zero Christoffel symbols are given by \eqref{eq:proj-flat-1}, \eqref{eq:proj-flat-3}. Moreover, in the later case the potential function of the soliton is given by $h(x^1,x^2)=\tilde{h}(x^2)+x^1\hat{h}(x^2)$, where $\hat{h}(x^2)=\kappa e^{\Xi_{12}^1(x^2)}$ and $\tilde{h}$ is given by \eqref{eq:proj-flat-4}.
\end{theorem}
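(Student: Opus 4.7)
The plan is to start from Opozda's adapted coordinates \eqref{eq:proj-flat-1}, in which the only possibly nonzero Christoffel symbols are ${}^D\Gamma_{12}^1={}^D\Gamma_{12}^1(x^2)$ and ${}^D\Gamma_{22}^1={}^D\Gamma_{22}^1(x^1,x^2)$, and to analyze the three components of $\operatorname{Hes}^D_h+2\rho^D_{sym}=0$ listed in \eqref{eq:proj-flat-2} in a cascading order that successively pins down the shape of $h$ and the admissible connections.

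First I would read off the $(\partial_{x^1},\partial_{x^1})$-component: since $\operatorname{Hes}^D_h(\partial_{x^1},\partial_{x^1})=\partial^2_{x^1x^1}h$ and $\rho^D_{sym}(\partial_{x^1},\partial_{x^1})=0$, the soliton equation forces $h$ to be affine in $x^1$, so $h(x^1,x^2)=\tilde h(x^2)+x^1\hat h(x^2)$. Substituting this into the $(\partial_{x^1},\partial_{x^2})$-component yields the linear homogeneous ODE $\hat h'(x^2)={}^D\Gamma_{12}^1(x^2)\hat h(x^2)$, whose general solution is $\hat h(x^2)=\kappa\, e^{\Xi_{12}^1(x^2)}$ for a primitive $\Xi_{12}^1$ of ${}^D\Gamma_{12}^1$ and $\kappa\in\mathbb{R}$. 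Here I would split into cases according to whether $\kappa=0$ (equivalently $\hat h\equiv 0$) or $\kappa\neq 0$.

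In the case $\hat h\equiv 0$ the potential reduces to $h=h(x^2)$, and the remaining $(\partial_{x^2},\partial_{x^2})$-equation becomes $h''(x^2)=2({}^D\Gamma_{12}^1)^2+2\partial_{x^2}{}^D\Gamma_{12}^1-2\partial_{x^1}{}^D\Gamma_{22}^1$. The left side depends only on $x^2$, so the right side must too, which forces $\partial^2_{x^1x^1}{}^D\Gamma_{22}^1=0$; by the criterion recalled after \eqref{eq:proj-flat-1}, this is exactly projective flatness. Conversely, if $D$ is projectively flat, the right-hand side is a function of $x^2$ alone and an $h(x^2)$ solving the resulting ODE always exists locally, giving a non-trivial soliton.

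In the case $\kappa\neq 0$ I substitute $h=\tilde h(x^2)+x^1\hat h(x^2)$ into the $(\partial_{x^2},\partial_{x^2})$-equation and differentiate with respect to $x^1$ to eliminate $\tilde h''(x^2)$; this integrability step produces the linear second-order equation \eqref{eq:proj-flat-2b} in $x^1$ for the unknown ${}^D\Gamma_{22}^1(x^1,x^2)$, which I solve explicitly (using $\hat h(x^2)=\kappa e^{\Xi_{12}^1}$ as coefficient) to obtain the expression \eqref{eq:proj-flat-3} with free functions $\alpha(x^2),\beta(x^2)$. Plugging \eqref{eq:proj-flat-3} back into the third equation makes the terms depending on $x^1$ collapse and leaves precisely the ODE \eqref{eq:proj-flat-4} that determines $\tilde h$, completing the ``only if'' direction; the ``if'' direction is a direct verification that the connection with Christoffel symbols \eqref{eq:proj-flat-1}, \eqref{eq:proj-flat-3} together with the stated $h$ satisfies all three components of \eqref{eq:proj-flat-2}. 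The main obstacle I foresee is the bookkeeping at the compatibility step: handling the $\kappa=0$ branch cleanly so that it recovers exactly projective flatness (and not a strictly smaller class), and ensuring that the integration leading to \eqref{eq:proj-flat-3} correctly absorbs the two integration constants as the arbitrary functions $\alpha(x^2)$ and $\beta(x^2)$ without over- or under-determining ${}^D\Gamma_{22}^1$.
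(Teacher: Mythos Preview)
Your proposal is correct and follows essentially the same approach as the paper: both start from Opozda's adapted coordinates \eqref{eq:proj-flat-1}, use the $(\partial_{x^1},\partial_{x^1})$- and $(\partial_{x^1},\partial_{x^2})$-components to force $h=\tilde h(x^2)+x^1\hat h(x^2)$ with $\hat h=\kappa e^{\Xi_{12}^1}$, then differentiate the $(\partial_{x^2},\partial_{x^2})$-component in $x^1$ to obtain the compatibility condition \eqref{eq:proj-flat-2b}, which in the case $\hat h\neq 0$ is solved for ${}^D\Gamma_{22}^1$ as in \eqref{eq:proj-flat-3} and reduces the remaining equation to \eqref{eq:proj-flat-4}. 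The only organizational difference is that the paper treats the projectively flat case first as a standalone observation, whereas you recover it as the $\kappa=0$ branch; the content is the same.
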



\begin{remark}
\rm
So far all the examples previously discussed of affine gradient Ricci solitons have Ricci tensor of rank one. However, there also exist non-trivial examples with Ricci tensor of rank two. An immediate application of the compatibility condition \eqref{eq:compatibility} shows that such an example must fail to be projectively flat, as already mentioned. We consider locally homogeneous affine connections of Type B defined by \eqref{eq: Type-B} and specify the Christoffel symbols to be given by
${}^D\Gamma_{11}^1(x^1,x^2)=\frac{1}{x^1}\left(\gamma_{12}^2\pm\sqrt{1+2\gamma_{12}^2}\right)$,
${}^D\Gamma_{12}^2(x^1,x^2)=\frac{\gamma_{12}^2}{x^1}$,
${}^D\Gamma_{22}^1(x^1,x^2)=\frac{\gamma_{22}^1}{x^1}$
for non-zero constants $\gamma_{12}^2$, $\gamma_{22}^1$ such that $\gamma_{12}^2\geq\frac{1}{2}$.
Then $(\Sigma,D)$ is a locally homogeneous affine gradient Ricci soliton with potential function
$h(x^1,x^2)=\kappa \pm 2\left( \mp 1\pm\sqrt{1+2\gamma_{12}^2} \right)
\ln\left( x^1\mp x^1\sqrt{1+2\gamma_{12}^2}\right)$.

Moreover, the Ricci tensor $\rho^D$ defines a metric of constant scalar curvature 
$\tau=\frac{1\mp\sqrt{1+2\gamma_{12}^2}}{(\gamma_{12}^2)^2}$ on $\Sigma$. 
\end{remark}


\begin{thebibliography}{99}
\bibitem{afifi}
Z. Afifi, 
Riemann extensions of affine connected spaces, 
\emph{Quart. J. Math., Oxford Ser. (2)} \textbf{5} (1954), 312--320.

\bibitem{AM-K}
T. Arias-Marco, and O. Kowalski,
Classification of locally homogeneous affine connections with arbitrary torsion on 2-dimensional manifolds,
\emph{Monatsh. Math.} \textbf{153} (2008), 1--18.

\bibitem{BBR}
N. Bokan, N. Bla\v{z}i\'c, and Z. Raki\'c,
Osserman pseudo-Riemannian manifolds of signature $(2,2)$, 
\emph{J. Aust. Math. Soc.} \textbf{71} (2001), 367--395. 

\bibitem{BGG}
M. Brozos-V\'{a}zquez, E. Garc\'{\i}a-R\'{\i}o, and S. Gavino-Fern\'{a}ndez, 
Locally conformally flat Lor\-entzian gradient Ricci solitons, 
\emph{J. Geom. Anal.} \textbf{23} (2013), 1196--1212.

\bibitem{walker-metrics}
M. Brozos-V\'{a}zquez, E. Garc\'{i}a-R\'{i}o, P. Gilkey, S. Nik\v{c}evi\'{c}, and R. V\'{a}zquez-Lorenzo,
\emph{The geometry of Walker manifolds},
Synthesis Lectures on Mathematics and Statistics \textbf{5}, 
Morgan \& Claypool Publ., Williston, VT, 2009.


\bibitem{bv-gr-vl-2}
M. Brozos-Vázquez, E. García-Río, and R. Vázquez-Lorenzo, Osserman and conformally Osserman manifolds with warped and twisted product structure. {\it Results Math.} {\bf 52} (2008), no. 3-4, 211--221.

\bibitem{CLGRVL}
E. Calviño-Louzao, E. García-Río, and R. Vázquez-Lorenzo, 
Riemann Extensions of Torsion-Free Connections with Degenerate Ricci Tensor,
\emph{Canad. J. Math.} \textbf{62} (2010), 1037--1057.

\bibitem{CLGRGVL}
E. Calviño-Louzao, E. García-Río, P. Gilkey, and R. Vázquez-Lorenzo,
The geometry of modified Riemannian extensions, 
\emph{Proc. R. Soc. Lond. Ser. A Math. Phys. Eng. Sci.} \textbf{465} (2009), 2023--2040. 



\bibitem{Cao}
H.-D. Cao, 
Geometry of complete gradient shrinking Ricci solitons,
\emph{Geometry and analysis. No. 1}, 227--246, 
\emph{Adv. Lect. Math. (ALM)} \textbf{17}, Int. Press, Somerville, MA, 2011.

\bibitem{chen-wang}
X. Chen, and Y. Wang, 
On four-dimensional Anti-self-dual Gradient Ricci solitons, 
\emph{J. Geom. Anal.}, DOI 10.1007/s12220-014-9471-8, to appear.

\bibitem{RicciFlow}
B. Chow, S.-Ch. Chu, D. Glickenstein, C. Guenther, J. Isenberg, 
T. Ivey, D. Knopf, P. Lu, F. Luo and L. Ni,
\emph{The Ricci flow: techniques and applications. Part I. Geometric aspects},
Mathematical Surveys and Monographs, \textbf{135}.
American Mathematical Society, Providence, RI, 2007.

\bibitem{DR-GR-VL}
J. C. Díaz-Ramos, E. García-Río, and R. Vázquez-Lorenzo, 
Four-dimensional Osserman metrics with nondiagonalizable Jacobi operators, 
\emph{J. Geom. Anal.} \textbf{16} (2006), 39--52.

\bibitem{Derd-rs}
A. Derdzinski, 
Ricci solitons, Preprint.

\bibitem{Derd}
A. Derdzinski, 
Non-Walker self-dual neutral Einstein four-manifolds of Petrov type III,
\emph{J. Geom. Anal.} \textbf{19} (2009), 301--357.

\bibitem{Derdzinski}
A. Derdzinski, 
Einstein metrics in dimension four, 
\emph{Handbook of Differential Geometry, vol. I}  
419--707, North-Holland, Amsterdam, 2000.

\bibitem{Derd-3}
A. Derdzinski, 
Self-dual K\"ahler manifolds and Einstein manifolds of dimension four,
\emph{Compositio Math.} \textbf{49} (1983), 405--433.

\bibitem{DW}
M. Dunajski, and S. West, 
Anti-self-dual conformal structures in neutral signature, 
\emph{Recent developments in pseudo-Riemannian geometry}, 113--148, 
\emph{ESI Lect. Math. Phys.}, Eur. Math. Soc., Zürich, 2008. 

\bibitem{E-LN-M}
M. Eminenti, G. La Nave, and C. Mantegazza,
Ricci solitons: the equation point of view,
\emph{Manuscripta math.} \textbf{127} (2008), 345--367. 

\bibitem{manolo-eduardo}
M. Fernández-López, E. García-Río, D. N. Kupeli, and B. Ünal,
A curvature condition for a twisted product to be a warped product. 
\emph{Manuscripta math.} \textbf{106} (2001), 213--217.

\bibitem{FL-GR}
M. Fernández-López, and E. Garc\'{i}a-R\'{i}o, 
Rigidity of shrinking Ricci solitons,
\emph{Math. Z.} \textbf{269} (2011), 461--466.


\bibitem{Leistner}
T. Leistner,
{Conformal holonomy of $C$-spaces, {R}icci-flat, and {L}orentzian manifolds},
\emph{Differential Geom. Appl.}, \textbf{24} (2006), 458--478.

\bibitem{Opozda}
B. Opozda, 
A classification of locally homogeneous connections on $2$-dimensional manifolds, \emph{Differential Geom. Appl.} \textbf{21} (2004), 173--198.

\bibitem{Op1}
B. Opozda,
A class of projectively flat surfaces,
\emph{Math. Z.} \textbf{219} (1995), 77--92.

\bibitem{ponge-reckziegel}
R. Ponge, and H. Reckziegel, 
Twisted products in pseudo-Riemannian geometry, 
\emph{Geom. Dedicata} \textbf{48} (1993), 15--25.

\bibitem{YI}
K. Yano, and S. Ishihara,
\emph{Tangent and cotangent bundles: differential geometry}, 
Pure and Applied Mathematics, No. 16. Marcel Dekker, Inc., New York, 1973.
\end{thebibliography}
\end{document}